\newtheorem{theorem}{Theorem}
\newtheorem{lemma}[theorem]{Lemma}
\newtheorem{conjecture}[theorem]{Conjecture}
\newcommand{\A}{\mathcal{A}}
\renewcommand{\O}{\Omega}
\title{Antimatroids and Balanced Pairs}
\author{David Eppstein\\
\small Computer Science Department, University of California, Irvine}
\date{ }
\begin{document}
\maketitle

\begin{abstract}
We generalize the $\frac13$~--~$\frac23$ conjecture from partially ordered sets to antimatroids: we conjecture that any antimatroid has a pair of elements $x,y$ such that $x$ has probability between $\frac13$~and~$\frac23$ of appearing earlier than $y$ in a uniformly random basic word of the antimatroid. We prove the conjecture for antimatroids of convex dimension two (the antimatroid-theoretic analogue of partial orders of width two), for antimatroids of height two, for antimatroids with an independent element, and for the perfect elimination antimatroids and node search antimatroids of several classes of graphs. A computer search shows that the conjecture is true for all antimatroids with at most six elements.
\end{abstract}

\section{Introduction}

The linear extensions of a finite partial order model the sequences that may be formed by
adding one element at a time to the end of the sequence, with the constraint that an element may only be added once all its predecessors have already been added. This constraint is \emph{monotonic}: once an element becomes available to be added, it remains available until it actually is added. However, not every system of monotonic constraints may be modeled by a partial order. Sets of orderings formed by adding one element at a time to the end of a partial sequence, with monotonic constraints that may depend arbitrarily on the previously added elements of the sequence, may instead be modeled in full generality by mathematical structures known as \emph{antimatroids}.

Because antimatroids generalize finite partial orders, they can model the ordering of objects in  applications for which partial orders are inadequate, including single-processor scheduling~\cite{BoyFai-DAM-90}, event ordering in discrete event simulations~\cite{GlaYao-94}, and the sequencing of lessons in automated learning environments~\cite{DoiFal-LS-11,Epp-LS-13}. These applications, in turn, make it of interest to extend to antimatroids the mathematical problems and results that have been studied for partial orders and their linear extensions. In this paper we extend to antimatroids the well-known $\frac13$~--~$\frac23$ conjecture for partially ordered sets, stating that any non-total partial order has two elements that are nearly equally likely to appear in either order in a uniformly random linear extension. We prove that the conjecture is true  for several special classes of antimatroids, and we show by a computer search that it is true for all antimatroids that have at most six elements.

\section{Preliminaries}
\subsection{Antimatroids}
We recall some standard definitions from antimatroid theory. For a more detailed overview of antimatroids and their theory, see Korte, Lov\'asz and Schrader~\cite{KorLovSch-Greedoids-91}.

An \emph{antimatroid} is a finite family $\A$ of finite sets that is \emph{accessible}---for all nonempty $S\in\A$, there exists $x\in S$ such that $S\setminus\{x\}\in\A$---and closed under unions. We refer to the members of $\bigcup\A$ as the \emph{elements} of $\A$. A \emph{path} in an antimatroid $\A$ is a set $P\in\A$ with the property that there is only one element $x$ of $P$ (the \emph{endpoint} of the path) for which $P\setminus\{x\}\in\A$; every set $S$ in $\A$ may be represented as a union of paths. The \emph{path poset} of $\A$ is the set inclusion ordering on the paths of $\A$.
A \emph{basic word} of $\A$ is a permutation $\sigma$ of its elements with the property that, for each prefix of $\sigma$, the set of elements in the prefix is a member of $\A$.
A \emph{chain antimatroid} is an antimatroid with a single basic word.
The \emph{join} of antimatroids  $\A$ and $\mathcal{B}$ is the family $\{A\cup B\mid A\in\A\wedge B\in\mathcal{B}\}$. Joins are commutative and associative so we may refer without ambiguity to the join of any finite set of antimatroids. Any antimatroid $\A$ may be represented as a join of chain antimatroids, for instance by choosing one chain for each basic word of $\A$; the \emph{convex dimension} of $\A$ is the minimum number of chain antimatroids whose join is $\A$, and equals the width of the path poset of $\A$~\cite{EdeSak-Order-88}.

We model a finite \emph{partially ordered set} as a transitive, reflexive binary relation $\le$ on a finite set of elements.
The family of lower sets of any finite partially ordered set $P$ forms an antimatroid, the \emph{poset antimatroid} of $P$. The poset antimatroid has the linear extensions of $P$ as its basic words, the principal ideals of $P$ as its paths, $P$ itself as its path poset, and the width of $P$ as its convex dimension.

An alternative terminology for the same class of combinatorial objects as antimatroids is given by the theory of convex geometries~\cite{EdeJam-GD-85}, finite families of finite sets that are closed under intersections and that satisfy the property that, whenever a set $A$ belongs to the family but is not the union of the family, there exists $x\notin A$ such that $A\cup\{x\}$ belongs to the family. The sets of a convex geometry are exactly the complements of the sets in an antimatroid. In convex geometry terminology, antimatroid paths correspond to the complements of copoints, endpoints of paths correspond to points of attachment, basic words correspond to compatible orders, and chain antimatroids correspond to monotone alignments. Although antimatroids and convex geometries are mathematically equivalent, we prefer the antimatroid terminology because of the applications of antimatroids in computer-aided instruction, where (under the name ``learning spaces'') they represent the sets of concepts that a human learner is capable of mastering~\cite{DoiFal-LS-11}. In this application, accessibility and closure under unions are well-motivated pedagogically: it should be possible to learn a set of concepts one at a time, and learning one set of concepts should not prevent learning a different set. However, closure under intersections is not generally the case in this application. The sets of orderings given by the basic words of an antimatroid have a natural meaning in this context, as the sequences in which a student may progress through the concepts, and subsets of these orderings may also be used in this setting as a data structure that speeds the computerized assessment of a student's state of knowledge~\cite{Epp-LS-13}. From the point of view of this application, it is important to understand the mathematics of these sets of orderings.

\subsection{The $\mathbf{\frac13}$~--~$\mathbf{\frac23}$ conjecture for partial orders}

For any partially ordered set $P$ containing elements $x$ and $y$, let $\Pr[x<y]$ be the probability that $x$ occurs earlier than $y$ in a linear extension of $P$ selected uniformly at random among all possible linear extensions.
Define the \emph{balance} of $P$ to be
$$\delta(P) = \max_{x,y\in P}\min\left\{ \Pr[x<y],\Pr[y<x]\right\} .$$
That is, we find the pair whose probability of appearing in either order is as close as possible to  $\frac12$ and define the balance of the partial order to be the smaller of the two probabilities for that pair.

We say that $x,y$ is a \emph{balanced pair} if $\frac13\le\Pr[x<y]\le\frac23$.
The well-known $\frac13$~--~$\frac23$ conjecture~\cite{Kis-MZ-68,Sak-Order-85,Bri-DM-99} states that every finite partial order that is not total has a balanced pair, or equivalently $\delta(P)\ge \frac13$. It is known that there exists a constant $c$ such that every finite partial order has $\delta(P)\ge c$~\cite{BriFelTro-Order-95,KahSak-Order-84}, but it is not known that $c=1/3$. It is also known that $\delta(P)\ge\frac13$ whenever $P$ is a semiorder~\cite{Bri-Order-89}, a partial order of height two~\cite{TroGehFis-Order-92}, a partial orders with at most eleven elements~\cite{Pec-Order-06}, or a partial order of width two~\cite{Lin-SJoC-84}. Width-two partial orders are conjectured to be the most difficult case of the $\frac13$~--~$\frac23$ conjecture, in the sense that the minimum balance of width-$w$ partial orders is conjectured to tend to $1/2$ in the limit as $w$ goes to infinity~\cite{KahSak-Order-84}, and the smallest balance of any known partial order with width${}>2$ is $14/39$~\cite{Sak-Order-85,TroGehFis-Order-92}.

\subsection{The $\mathbf{\frac13}$~--~$\mathbf{\frac23}$ conjecture for antimatroids}

By analogy to the definitions in the previous section, for any pair $x$ and $y$ of elements of an antimatroid $\A$, let
$\Pr[x<y]$ be the probability that $x$ occurs earlier than $y$ in a basic word of $\A$ selected uniformly at random among all possible basic words. Again, we define the balance of the antimatroid to be the probability that is closest to $1/2$ among these pairwise ordering probabilities,
$$\delta(\A) = \max_{x,y\in\A}\min\left\{ \Pr[x<y],\Pr[y<x]\right\}.$$

Based on the results in the remainder of this paper, we make the following conjecture:

\begin{conjecture}
\label{conj}
For every antimatroid $\A$ with more than one basic word, $\delta({\A})\ge\frac13$.
\end{conjecture}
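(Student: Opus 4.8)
The plan is not to establish Conjecture~\ref{conj} in full generality --- since every poset antimatroid is an antimatroid, such a proof would in particular resolve the classical $\frac13$~--~$\frac23$ conjecture, which is open --- but to accumulate evidence by verifying the conjecture on the antimatroid-theoretic analogues of the poset cases that are known, on families arising from graph searching, and on all small antimatroids. The organizing heuristic, borrowed from the poset setting where width two is conjectured to be extremal, is that the hardest instances should be those of small convex dimension, so the convex-dimension-two case should receive the most effort and is attacked first.

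The first and most substantial step is convex dimension two, the analogue of Linial's theorem for width-two posets~\cite{Lin-SJoC-84}. Here $\A$ is a join of two chain antimatroids, so every basic word interleaves two fixed sequences subject to the monotonic constraints, and I would try to adapt Linial's inductive argument: fix a candidate pair --- for example the endpoints of the two maximal paths, or the final elements of the two chains --- and set up a recurrence for the number of basic words placing one before the other by using accessibility to peel off a last element. The delicate point, absent for posets, is that the two chains are not simply parallel: the availability of an element of one chain can depend on progress along the other, so the clean product structure that Linial exploits has to be recovered from the path poset rather than assumed. Next I would dispose of the structurally easy reductions: for an antimatroid with an \emph{independent} element $x$ --- one whose presence or absence in any prefix neither constrains nor is constrained by the other elements --- the basic words are exactly the shuffles of $x$ into the basic words of the antimatroid on the remaining elements, and a direct count shows $x$ together with a suitable second element is a balanced pair, so one may assume no such element exists; for height two I would adapt the Trotter--Gehrer--Fishburn argument~\cite{TroGehFis-Order-92}, where the shallow path poset makes the enumeration of basic words tractable. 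I would then handle the perfect elimination and node search antimatroids of restricted graph classes (trees, and other classes with simple separator structure) by describing their basic words explicitly in graph-theoretic terms and estimating the ordering probabilities directly. Finally a computer search enumerating all antimatroids on at most six elements --- generated via their path posets --- settles the small cases and guards against the conjecture being false.

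Orthogonally, to obtain a general bound $\delta(\A)\ge c$ for some absolute $c>0$, I would look for an antimatroid analogue of the Kahn--Saks machinery~\cite{KahSak-Order-84}: a polytope of ``fractional basic words'' on which conditioning on the position of one element gives a log-concave or Alexandrov--Fenchel-type inequality, from which the usual comparison of two such conditioned families would yield the bound. It is far from clear that the favorable geometry of the order polytope survives the passage from posets to antimatroids, so this is a speculative rather than a load-bearing part of the plan.

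The main obstacle, as in the poset case, is expected to be convex dimension two: it is the analogue of the conjecturally extremal width-two instances, the join structure genuinely complicates any attempt to transplant Linial's recurrence, and even a successful argument there only reaches the threshold $\frac13$ rather than improving on it. Everything beyond the special cases above --- and in particular any uniform constant bound --- rests on the uncertain existence of the log-concavity phenomenon described in the previous paragraph, so the realistic target of this paper is the list of special cases together with the empirical check on all antimatroids of at most six elements.
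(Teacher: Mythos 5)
You correctly recognize that this statement is a conjecture rather than a theorem --- a full proof would subsume the open $\frac13$~--~$\frac23$ conjecture for posets --- and your proposed program (convex dimension two, height two, independent elements, graph-derived antimatroids, exhaustive search on at most six elements) matches the paper's list of verified cases almost exactly. Where you diverge is in method, and the divergence is instructive. The paper's load-bearing idea, which your plan lacks, is a small collection of reusable substructures defined for \emph{arbitrary} sets of orderings, each of which forces a balanced pair: independent elements (Theorem~\ref{thm:independent-element}), double ladders (Theorem~\ref{thm:double-ladder-is-balanced}), and a majority of initial or final elements (Theorem~\ref{thm:many-initial}). The double-ladder lemma --- a monotone counting argument on the partition of orderings by ``rung'' --- then dispatches convex dimension two, $k$-trees, block graphs, and node search on distance-hereditary graphs uniformly, where you propose a separate Linial-style recurrence for convex dimension two and ad hoc probability estimates for each graph class; your recurrence is plausible (the feasible sets of a convex-dimension-two antimatroid are staircase lattice points, much as in Linial's setting) but would not generalize to the graph cases the way the ladder abstraction does. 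For height two you propose adapting Trotter--Gehrle--Fishburn, whereas the paper deliberately avoids that and instead observes that a height-two antimatroid on at least seven elements has a majority of initial or final elements, reducing to Theorem~\ref{thm:many-initial} plus the computer check --- a substantially simpler route. Finally, your enumeration ``via path posets'' would not work as stated, since the path poset does not determine the antimatroid; the paper instead enumerates by reverse search on a tree of antimatroids rooted at the power set, using Lemma~\ref{lem:rev-search-parent} to define the parent relation. Your speculative Kahn--Saks/Alexandrov--Fenchel direction for a uniform constant is not pursued in the paper at all, and you are right to flag it as the least certain component.
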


Equivalently, if we define a balanced pair in an antimatroid to be a pair $x,y$ of elements for which $\min\left\{ \Pr[x<y],\Pr[y<x]\right\}\ge\frac13$, then the conjecture states that every antimatroid with more than one basic word contains a balanced pair. We define an antimatroid to be \emph{balanced} if it either contains a balanced pair or is a chain antimatroid, and \emph{unbalanced} otherwise; the conjecture states that all antimatroids are balanced.

\section{Sets of orderings}

Before giving our results on antimatroids, we provide some results on more general sets of orderings. Given a finite set $S$ of $n$ elements, we define a \emph{set of orderings} $\O$ to be a nonempty subset of the $n!$ total orderings of $S$. Thus, there are $2^{n!}-1$ possible sets of orderings. We refer to the members of $S$ as \emph{elements} of $\O$, and the members of $\O$ itself as the \emph{orderings} of $\O$.

We remark that sets of orderings, in general, do not obey the $\frac13$--$\frac23$ conjecture. For instance, if $n=5$ and $S=\{0,1,2,3,4\}$, then the set of orderings
$$\O=\{01234, 01243, 01324, 23014\}$$
(defined from a family of sets that is accessible but not closed under unions)
does not have a balanced pair: for every pair $(x,y)$ with $x<y$, $x$ appears earlier than $y$ in three out of four of these orderings. More generally the balance of a set of orderings may be arbitrarily small: the set of orderings on $n$ items consisting of a single permutation $\sigma$ of the items together with the $n-1$ permutations generated by transposing two adjacent elements of $\sigma$ has balance~$1/n$.

Although sets of orderings may not be balanced, certain substructures within them, when present, ensure that the set of orderings has a balanced pair. In the following three subsections we describe three such substructures, \emph{independent elements},  \emph{double ladders}, and \emph{many initial or final elements}. Later in this paper we will use these structures to show that certain specific types of antimatroid are guaranteed to be balanced.

\subsection{Independent elements}

As we have already done for partial orders and antimatroids, define $\Pr[x<y]$ (for any pair $x$ and $y$ of elements of a set $\O$ of orderings) to be the probability that $x$ occurs earlier than $y$ in a ordering of $\O$ selected uniformly at random among all of its ordering.
Define a binary relation $\prec$ on the elements of $\O$, according to which
$x\prec y$ if and only if $\Pr[x<y]>\frac23$. Thus, $\prec$ is antisymmetric, and two elements are related by $\prec$ if and only if they do not form a balanced pair.

\begin{lemma}
\label{lem:prec-is-total}
If $\O$ has no balanced pair then $\prec$ is the ordering relation of an (irreflexive) total order.
\end{lemma}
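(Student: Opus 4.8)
The plan is to show that $\prec$ is irreflexive, total (every pair of distinct elements is comparable), and transitive; since it is already noted to be antisymmetric, these properties together make it a strict total order. Irreflexivity is immediate because $\Pr[x<x]$ is not even defined, or may be taken to be $\tfrac12 \not> \tfrac23$; and totality is exactly the hypothesis that $\O$ has no balanced pair, since for distinct $x,y$ the failure of $\{x,y\}$ to be balanced means $\min\{\Pr[x<y],\Pr[y<x]\}<\tfrac13$, i.e.\ one of the two probabilities exceeds $\tfrac23$, which is precisely $x\prec y$ or $y\prec x$. So the entire content of the lemma is transitivity.

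For transitivity I would argue by contradiction: suppose $x\prec y$ and $y\prec z$ but not $x\prec z$. By totality (which we have just established from the no-balanced-pair hypothesis), $z\prec x$. So we have a directed $3$-cycle $x\prec y\prec z\prec x$, meaning $\Pr[x<y]>\tfrac23$, $\Pr[y<z]>\tfrac23$, and $\Pr[z<x]>\tfrac23$. The idea is to derive a numerical impossibility by looking at the six relative orders of the triple $\{x,y,z\}$ within a random ordering of $\O$. Partition the orderings of $\O$ according to which of the $3!=6$ permutations of $\{x,y,z\}$ they induce, and let $p_{abc}$ denote the probability of inducing the order $a<b<c$; these six nonnegative numbers sum to $1$. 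Then $\Pr[x<y]=p_{xyz}+p_{xzy}+p_{zxy}$, and similarly for the other pairs. Summing the three ``bad'' inequalities $\Pr[x<y]+\Pr[y<z]+\Pr[z<x]>2$ and expressing the left side in the $p_{abc}$ variables, each of the six probabilities $p_{abc}$ gets counted either once or twice; in fact the two cyclic orders $p_{xyz}$ and $p_{zxy}$... (let me recompute): one checks that $\Pr[x<y]+\Pr[y<z]+\Pr[z<x]$ equals $2(p_{xyz}+p_{yzx}+p_{zxy}) + (p_{xzy}+p_{zyx}+p_{yxz})$, i.e.\ twice the probability of a ``clockwise'' cyclic order plus once the probability of a ``counterclockwise'' one. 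Since all $p_{abc}\ge 0$ and they sum to $1$, this quantity is at most $2$, contradicting that it is strictly greater than $2$. Hence no such cycle exists and $\prec$ is transitive.

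I expect the only real subtlety to be the bookkeeping in the previous step: making sure the coefficient pattern (each $p_{abc}$ appearing with coefficient $1$ or $2$, and the total bounded by $2$) is stated correctly, which is a finite check over the six permutations. The argument uses nothing about antimatroids or even about the combinatorial origin of $\O$ — only that the pairwise probabilities arise from a genuine probability distribution on total orders — so it applies verbatim to the general sets of orderings introduced in this section. Once transitivity is in hand, $\prec$ is irreflexive, antisymmetric, transitive, and total, hence a strict total order, completing the proof.
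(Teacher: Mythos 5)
Your proof is correct and follows essentially the same route as the paper: reduce to showing $\prec$ has no directed $3$-cycle, then derive a numerical contradiction from the three probabilities each exceeding $\tfrac23$. The only cosmetic difference is that the paper uses the single inequality $\Pr[x_2<x_0]\le\Pr[x_1<x_0]+\Pr[x_2<x_1]$ (giving $\tfrac23<\tfrac23$) where you sum all three pairwise probabilities and bound the total by $2$; your coefficient bookkeeping over the six permutations checks out.
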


\begin{proof}
If $\O$ is unbalanced, $\prec$ defines a \emph{tournament}, a binary relation in which every two elements are related in exactly one way. Every tournament $R$ on a finite set of elements either defines a total order (that is, is a \emph{transitive tournament}), or it has three elements $x_0$, $x_1$, and $x_2$ with $x_0 R x_1$, $x_1 R  x_2$, and $x_2 R x_0$~\cite{HarMos-AMM-66}. However, such a cyclic triple is not possible for~$\prec$. For, in any ordering of $\Omega$ in which $x_2$ is earlier than $x_0$, it must also be the case that either $x_1$ precedes $x_0$ or $x_2$ precedes $x_1$. Therefore, $\Pr[x_2<x_0]\le\Pr[x_1<x_0]+\Pr[x_2<x_1]$. But if $x_0\prec x_1\prec x_2\prec x_0$ then $\Pr[x_2<x_0]>\frac23$, while $\Pr[x_1<x_0]<\frac13$ and $\Pr[x_2<x_1]<\frac13$, contradicting this inequality.
\end{proof}

We define an \emph{independent element} of a set of orderings $\O$ to be an element $x$ such that, in every ordering of $\O$, $x$ may be swapped with either of the elements that are adjacent to it in the ordering, producing another ordering that belongs to $\O$. That is, if $x$ is an independent element, it may occur in any position in any ordering of $\O$, independently of how the other elements are ordered. For instance, in a partially ordered set, an independent element is an element that does not have any order relations with other elements (Figure~\ref{fig:independent-element}). In an antimatroid, an independent element is an element $x$ such that $\{x\}$ is a path and such that $x$ is not a member of any other paths.

\begin{figure}[t]
\centering\includegraphics[scale=0.8]{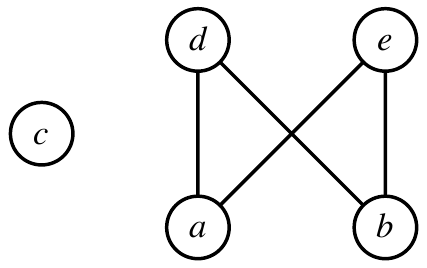}
\caption{The Hasse diagram of a partially ordered set with an independent element.}
\label{fig:independent-element}
\end{figure}

\begin{theorem}
\label{thm:independent-element}
Let $\O$ be a set of orderings containing an independent element. Then $\O$ is balanced.
\end{theorem}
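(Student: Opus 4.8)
The plan is to argue by contradiction. Suppose $\O$ contains an independent element $x$ but has no balanced pair; we may assume $\O$ has at least two elements, as the claim is otherwise trivial. By Lemma~\ref{lem:prec-is-total} the relation $\prec$ is then the relation of a total order, so we may list the elements as $z_1\prec z_2\prec\cdots\prec z_n$ with $x=z_j$ for some index $j$, and $\Pr[z_i<z_k]>\frac23$ whenever $i<k$.

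The first step is to extract a clean formula for $\Pr[x<y]$ from the independence of $x$. Deleting $x$ from each ordering of $\O$ yields a fixed set $\O'$ of orderings of the remaining $n-1$ elements, and $\O$ is precisely the family of orderings obtained by inserting $x$ into one of the $n$ gaps of an ordering of $\O'$; all of these insertions are distinct and lie in $\O$, so a uniformly random ordering of $\O$ arises from a uniformly random $\tau\in\O'$ together with $x$ placed into a uniformly random one of the $n$ gaps of $\tau$. Conditioned on $\tau$, the probability that $x$ falls before $y$ is the rank of $y$ in $\tau$ divided by $n$, so for every $y\ne x$,
$$\Pr[x<y]=\frac{1+\rho(y)}{n},\qquad \rho(y)=\sum_{z\ne x,\, z\ne y}\Pr[z<y],$$
where $\rho(y)$ is the expected number of elements other than $x$ that precede $y$ (so $1+\rho(y)$ is the expected rank of $y$ in a random $\tau$).

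The heart of the argument is to apply this formula to the two $\prec$-neighbours of $x$. Take the successor $s=z_{j+1}$ (assuming $j<n$): each of the $j-1$ elements $z_i$ with $i<j$ is $\prec$-below $s$ and so contributes at most $1$ to $\rho(s)$, while each of the $n-j-1$ elements $z_i$ with $i>j+1$ is $\prec$-above $s$ and so contributes less than $\frac13$; hence $\rho(s)\le(j-1)+\frac13(n-j-1)$, which simplifies to $\Pr[x<s]\le\frac{2j+n-1}{3n}$. Since $x\prec s$ forces $\Pr[x<s]>\frac23$, we get $j>\frac{n+1}{2}$. Symmetrically, take the predecessor $p=z_{j-1}$ (assuming $j>1$): each of the $j-2$ elements $z_i$ with $i<j-1$ is $\prec$-below $p$ and contributes more than $\frac23$ to $\rho(p)$, so $\Pr[x<p]\ge\frac{2j-1}{3n}$; since $p\prec x$ forces $\Pr[x<p]<\frac13$, we get $j<\frac{n+1}{2}$. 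If $1<j<n$ these two inequalities contradict each other; if $j=1$ the successor inequality alone gives $1>\frac{n+1}{2}$, impossible since $n\ge2$; and $j=n$ is excluded symmetrically. In every case we reach a contradiction, proving that $\O$ is balanced.

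I expect the main obstacle to be recognizing that the immediate $\prec$-neighbours of $x$ are the right pair to examine. A crude count of how many elements lie $\prec$-above $x$ versus $\prec$-below it does not force a contradiction on its own; what is needed is the observation that $z_{j+1}$ cannot be pushed far past $x$ --- because all $j-1$ elements below $x$ are also below $z_{j+1}$, which caps the expected rank of $z_{j+1}$ and hence $\Pr[x<z_{j+1}]$ --- together with the mirror-image bound for $z_{j-1}$. Once that pair is identified the two estimates are short and routine; the only additional care is the bookkeeping for the degenerate cases where $x$ is $\prec$-minimal or $\prec$-maximal, handled above by invoking whichever neighbour exists.
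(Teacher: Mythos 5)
Your proof is correct, and it rests on the same two pillars as the paper's own argument: Lemma~\ref{lem:prec-is-total} to get a total order from the assumed absence of a balanced pair, together with the observation that independence of $x$ makes a uniform ordering of $\O$ a uniform insertion of $x$ into a uniform ordering of $\O$ with $x$ deleted, so that $\Pr[x<y]$ equals the expected rank of $y$ (without $x$) divided by $n$, which is then bounded above and below using the $>\frac23$ and $<\frac13$ probabilities coming from the total order. The only difference is the endgame: the paper exhibits the balanced pair explicitly by pairing $x$ with the median element $y_{\lfloor n/2\rfloor-1}$ of the $\prec$-order on the remaining elements and sandwiching its expected rank between $n/3$ and $(2n-1)/3$, whereas you insert $x$ itself into the total order at position $j$ and derive the contradictory constraints $j>\frac{n+1}{2}$ and $j<\frac{n+1}{2}$ from its two $\prec$-neighbours (with correct handling of the extremal cases $j=1$ and $j=n$) --- an equally valid, purely contradiction-based variant that does not identify which pair is balanced.
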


\begin{proof}
Let $\O$ be the given set of orderings, with an independent element $x$, and let $\O_x$ be the set of orderings on one fewer element, formed by removing $x$ from the orderings of $\O$. If $\O_x$ has a balanced pair $(y,z)$, then $\Pr[y<z]$ is the same in $\O$ as it is in $\O_x$, and so $\O$ is also balanced. Otherwise, by Lemma~\ref{lem:prec-is-total}, $\prec$ defines a total order on $\O_x$, and we may number the elements of $\O_x$ as $y_0\prec y_1\prec\cdots\prec y_{n-2}$.

Let $p(y_i)$ be a random variable indicating the position of $y_i$ in a uniformly random ordering of $\O_x$ (where the first element of the ordering has position number one, etc), and let $\bar p(y_i)$ be the expected value of $p(y_i)$. A uniformly random ordering of $\O$ may be obtained by selecting a uniformly random ordering of $\O_x$ and inserting $x$ into one of $n$ positions chosen uniformly at random; $p(y_i)$ of these $n$ positions precede the position of $y_i$. Therefore, $\Pr[x<y_i]=\bar p(y_i)/n$.

We may lower bound $\bar p(y_i)$ by the expected number of elements that precede $y_i$ in the total order of $\prec$ and that also precede $y_i$ in a given randomly chosen ordering of $\O_x$. Each $y_j$ for $j<i$ contributes more than $2/3$ of a unit to this expectation (because its probability of preceding $y_i$ is more than $2/3$), so $\bar p(y_i)\ge 1 + \frac23i$. Similarly, we may upper bound $\bar p(y_i)$ by subtracting  the expected number of elements that follow $y_i$ in the total order of $\prec$ and that also follow $y_i$ in a random ordering of $\O_x$ from the maximum position that $y_i$ may take: $\bar p(y_i)\le n-1-\frac23(n-2-i)$. Choosing $i = \lfloor n/2\rfloor -1$, these lower and upper bounds simplify to $n/3 \le \bar p(y_i) \le (2n-1)/3$, and therefore $\frac13\le \Pr[x<y_i]<\frac23$. Thus, for this choice of $i$, $x$ and $y_i$ form a balanced pair.
\end{proof}

\subsection{Ladders}

Let $\O$ be a set of orderings, $x$ be an element of  $\O$, and $Y=(y_0,y_1,\dots,y_{k-1})$ be a sequence of a subset of the elements of $\O$.  We say that $(x,Y)$ is a \emph{ladder} in $\O$ if it meets the following two conditions:
\begin{enumerate}
\item In all orderings of $\O$, the elements of $Y$ that occur earlier than $x$ necessarily occur in the order given by the sequence $Y$.
\item If $w$ is an ordering of $\O$, some of the elements of $Y$ occur earlier than $x$ in $w$, and $i$ is the largest index of an element of $Y$ that occurs earlier than $x$ in $w$, then the ordering formed by swapping $x$ and $y_i$ also belongs to $\O$.
\end{enumerate}

We define a \emph{double ladder} to be a pair of sequences $X=(x_0,\dots)$ and $Y=(y_0,\dots)$ for which both $(x_0,Y)$ and $(y_0,X)$ are ladders. For instance, if $x$ and $y$ are an indistinguishable pair of elements (that is, swapping $x$ for $y$ in any ordering of $\O$ always produces another ordering of $\O$), then the single-element sequences $X=(x)$ and $Y=(y)$ form a double ladder. As a less-trivial example, if $P$ is a series-parallel partial order, and $X$ and $Y$ are two totally ordered subsets of $P$ that are combined by a parallel composition, then $(X,Y)$ forms a double ladder. As we will show, when this structure exists in a set of orderings, it ensures the existence of a balanced pair.

If $(x,Y)$ is a ladder, we define the \emph{rung} of a ordering $w$ to be the largest index of an element of $Y$ that occurs earlier than $x$ in $w$, or $-1$ if no such element exists, and we let $W_i$ denote the set of orderings of $\O$ whose rung is~$i$. Obviously, the sets $W_i$ form a partition of the orderings of $\O$.

\begin{lemma}
\label{lem:ladder-monotonicity}
If $(x,Y)$ is a ladder for $\O$, the sets $W_i$ are defined as above, and $i$ is an integer in the range $0\le i<k$, then $|W_i|\le |W_{i-1}|$.
\end{lemma}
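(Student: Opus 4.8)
The plan is to construct an injection $\phi\colon W_i\to W_{i-1}$; once we have this, $|W_i|\le|W_{i-1}|$ is immediate. The only natural candidate, and the one the ladder axioms are built to support, is the map that sends an ordering $w\in W_i$ to the ordering $\phi(w)$ obtained by transposing the positions of $x$ and $y_i$ in $w$. Since $w$ has rung $i$, the index $i$ is exactly the largest index of an element of $Y$ occurring before $x$ in $w$, so condition~(2) of the ladder definition guarantees $\phi(w)\in\O$.

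The next step is to verify that $\phi(w)$ has rung exactly $i-1$, so that it lies in $W_{i-1}$. Condition~(1) tells us the elements of $Y$ preceding $x$ in $w$ are exactly $y_0,y_1,\dots,y_i$, appearing in that order; in particular every $y_j$ with $j<i$ precedes $y_i$ in $w$. Transposing $x$ and $y_i$ then has the following effects: $y_i$, formerly before $x$, moves after $x$; each $y_j$ with $j>i$ keeps its position, which was already past $x$'s old position and hence past $x$'s new (earlier) position; and each $y_j$ with $j<i$ keeps its position, which was before $y_i$'s old position, i.e.\ before $x$'s new position. So the elements of $Y$ preceding $x$ in $\phi(w)$ are precisely $y_0,\dots,y_{i-1}$, giving $\phi(w)\in W_{i-1}$; the boundary case $i=0$ reads ``no element of $Y$ precedes $x$,'' so $\phi(w)\in W_{-1}$.

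Injectivity of $\phi$ is essentially automatic: $\phi$ is the restriction to $W_i$ of the involution that transposes the two fixed elements $x$ and $y_i$ on the set of all orderings of the elements of $\O$, and a restriction of an injection is an injection. It is worth noting that $\phi$ is neither surjective onto $W_{i-1}$ nor an involution on $W_i$ — to recover $w$ from $\phi(w)\in W_{i-1}$ one transposes $x$ with $y_i$, not with $y_{i-1}$ — but injectivity is all that is required.

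The one place where the ladder hypotheses do real work is the rung-drop computation, specifically the appeal to condition~(1) to know that $y_0,\dots,y_{i-1}$ all precede $x$ (and precede $y_i$) in $w$. Without that, some $y_j$ with $j<i$ could lie after $x$ in $w$, hence also after $x$ in $\phi(w)$, so the rung could drop by more than one and $\phi(w)$ could land in $W_{i'}$ for some $i'<i-1$; that would only yield the weaker bound $|W_i|\le\sum_{j<i}|W_j|$. Everything else is routine position bookkeeping under a transposition, so I expect no genuine obstacle beyond stating the rung-drop claim cleanly.
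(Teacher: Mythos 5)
Your proposal is correct and is essentially the paper's own proof: the paper's entire argument is the one-line observation that swapping $x$ and $y_i$ gives an injective (but not necessarily surjective) map from $W_i$ to $W_{i-1}$, and you have simply filled in the bookkeeping (membership in $\O$ via ladder condition~(2), the rung dropping to exactly $i-1$ via condition~(1), and injectivity via restriction of a transposition). Your identification of the rung-drop computation as the one place the ladder hypotheses do real work is exactly the detail the paper leaves implicit.
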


\begin{proof}
The operation of swapping $x$ and $y_i$ in an ordering of $\O$ forms an injective but not necessarily surjective map from $W_i$ to $W_{i-1}$.
\end{proof}

\begin{lemma}
\label{lem:delta-ladder}
If $(x,Y)$ is a ladder for $\O$, the sets $W_i$ are defined as above, and $i$ is an integer in the range $0\le i<k$, then
$$\Pr[y_i<x]=\frac{\sum_{j=i}^{k-1} |W_j|}{\sum_{j=-1}^{k-1} |W_j|}.$$
\end{lemma}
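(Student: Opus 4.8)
The plan is to compute $\Pr[y_i < x]$ directly from the partition of $\O$ into the rung-classes $W_{-1}, W_0, \dots, W_{k-1}$. The event $y_i < x$ means that $y_i$ occurs earlier than $x$ in the chosen ordering $w$. First I would observe that, by condition~(1) in the definition of a ladder, the elements of $Y$ occurring before $x$ in any ordering $w$ form a prefix $y_0, y_1, \dots, y_r$ of the sequence $Y$, where $r$ is precisely the rung of $w$. Hence $y_i$ occurs before $x$ in $w$ if and only if the rung $r$ of $w$ satisfies $r \ge i$. This is the key structural point, and it is essentially immediate from condition~(1): once the elements of $Y$ before $x$ are forced into the order of $Y$, the rung index $r$ determines exactly which of them precede $x$, namely $y_0,\dots,y_r$.

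Given that observation, the set of orderings $w$ with $y_i < x$ is exactly $\bigcup_{j=i}^{k-1} W_j$, and since the $W_j$ partition $\O$ this set has size $\sum_{j=i}^{k-1} |W_j|$. The total number of orderings of $\O$ is $\sum_{j=-1}^{k-1} |W_j|$. Dividing gives the claimed formula, since a uniformly random ordering is being chosen.

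I do not expect any serious obstacle here; the lemma is a direct bookkeeping consequence of the rung partition together with condition~(1) of the ladder definition. The only point requiring a little care is to make explicit that condition~(1) forces the $Y$-elements preceding $x$ to be an \emph{initial} segment $y_0,\dots,y_r$ of $Y$ (not an arbitrary subset), so that membership of $y_i$ in that segment is equivalent to the inequality $r \ge i$ on the rung. Once this is spelled out, the rest is just summing the sizes of the relevant blocks of the partition. Note that condition~(2) and Lemma~\ref{lem:ladder-monotonicity} play no role in this particular computation; they will presumably be used in a subsequent step that bounds this probability away from $0$ and $1$.
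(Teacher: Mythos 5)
Your proof is correct and follows essentially the same route as the paper's, which simply asserts that the orderings with $y_i<x$ are exactly those counted by the numerator and that the sets $W_j$ partition $\O$. The point you rightly single out---that the elements of $Y$ preceding $x$ form an initial segment $y_0,\dots,y_r$ of $Y$, so that $y_i<x$ is equivalent to the rung being at least $i$---is precisely what the paper leaves implicit in the phrase ``it follows from the definition of a ladder.''
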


\begin{proof}
It follows from the definition of a ladder that the orderings in which $y_i$ occurs earlier than $x$ are counted by the numerator of the right hand side of the equation, and all orderings of $\O$ are counted by the denominator.
\end{proof}

\begin{lemma}
\label{lem:ladder-top}
If $(x,Y)$ is a ladder for $\O$, the sets $W_i$ are defined as above, and $k$ is the length of $Y$, then
$\Pr[y_{k-1}<x]\le \frac12$.
\end{lemma}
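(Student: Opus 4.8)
The plan is to read $\Pr[y_{k-1}<x]$ directly off Lemma~\ref{lem:delta-ladder} and then bound the resulting ratio using the monotonicity of the sizes $|W_i|$ supplied by Lemma~\ref{lem:ladder-monotonicity}; no new combinatorial construction is needed.

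First I would instantiate Lemma~\ref{lem:delta-ladder} at $i=k-1$. The sum in the numerator then collapses to the single term $|W_{k-1}|$, giving
$$\Pr[y_{k-1}<x]=\frac{|W_{k-1}|}{\sum_{j=-1}^{k-1}|W_j|}.$$
Next I would bound the denominator from below by keeping only two of its terms, $\sum_{j=-1}^{k-1}|W_j|\ge|W_{k-2}|+|W_{k-1}|$ (with the convention that for $k=1$ this pair is $|W_{-1}|+|W_0|$, which is exactly the case $i=0$ of the next step, so nothing special happens). Applying Lemma~\ref{lem:ladder-monotonicity} with index $i=k-1$ gives $|W_{k-1}|\le|W_{k-2}|$, hence $|W_{k-2}|+|W_{k-1}|\ge 2|W_{k-1}|$. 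Substituting, $\Pr[y_{k-1}<x]\le |W_{k-1}|/(2|W_{k-1}|)=\tfrac12$ whenever $|W_{k-1}|>0$; and if $|W_{k-1}|=0$ then $\Pr[y_{k-1}<x]=0\le\tfrac12$ directly.

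There is essentially no obstacle here: the lemma is a two-line corollary of its two predecessors. The only points that merit a word of care are the degenerate case $|W_{k-1}|=0$ (handled above, using that $\O$ is nonempty so the denominator is positive) and checking that discarding all but the top two blocks $W_{k-2},W_{k-1}$ of the partition is legitimate, which it is since the $|W_j|$ are nonnegative. The underlying intuition is that every ordering with rung $k-1$—i.e.\ one in which all of $Y$ precedes $x$—is mapped injectively, by swapping $x$ past $y_{k-1}$, to a distinct ordering with strictly smaller rung, so the event ``$y_{k-1}$ before $x$'' can never capture more than half of the orderings.
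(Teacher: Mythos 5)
Your proof is correct and follows exactly the paper's argument: instantiate Lemma~\ref{lem:delta-ladder} at $i=k-1$, keep only the terms $|W_{k-2}|+|W_{k-1}|$ in the denominator, and apply Lemma~\ref{lem:ladder-monotonicity} to conclude the denominator is at least twice the numerator. The extra care you take with the degenerate cases ($k=1$ and $|W_{k-1}|=0$) is fine but not needed beyond what the paper already implicitly covers.
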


\begin{proof}
In the expression of Lemma~\ref{lem:delta-ladder} for $\Pr[y_{k-1}<x]$, the sum in the numerator includes only the single term $|W_{k-1}|$, whereas the sum in the denominator includes the terms $|W_{k-1}|$ and $|W_{k-2}|$ as well as possibly some others. By Lemma~\ref{lem:ladder-monotonicity}, these two terms alone are already enough to make the denominator at least twice the numerator.
\end{proof}

\begin{lemma}
\label{lem:small-ladder-is-balanced}
Let $\O$ be a set of orderings that has a ladder $(x,Y)$, and in addition suppose that $\Pr[y_0<x]\ge \frac12$. Then there exists an integer $i$ in the range $0\le i<k$ such that $\frac13\le\Pr[y_i<x]\le\frac23$.
\end{lemma}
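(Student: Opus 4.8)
The plan is to reason directly with the probabilities $p_i := \Pr[y_i<x]$. By Lemma~\ref{lem:delta-ladder} these are $p_i = \bigl(\sum_{j=i}^{k-1}|W_j|\bigr)\big/D$, where $D := \sum_{j=-1}^{k-1}|W_j|$ is the total number of orderings of $\O$. Hence $p_{i-1}-p_i = |W_{i-1}|/D \ge 0$, so the sequence $p_0 \ge p_1 \ge \cdots \ge p_{k-1}$ is nonincreasing; it starts at $p_0\ge\tfrac12$ by hypothesis and ends at $p_{k-1}\le\tfrac12$ by Lemma~\ref{lem:ladder-top}. The goal is to find an index at which this sequence lies in $[\tfrac13,\tfrac23]$.

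First I would dispose of the easy case: if $p_0\le\tfrac23$, then since also $p_0\ge\tfrac12\ge\tfrac13$, the index $i=0$ already works. So assume henceforth that $p_0>\tfrac23$. Taking $i=0$ in Lemma~\ref{lem:delta-ladder} gives $p_0 = 1-|W_{-1}|/D$, so $p_0>\tfrac23$ is equivalent to $|W_{-1}|<D/3$. Now Lemma~\ref{lem:ladder-monotonicity} chains to $|W_{-1}|\ge|W_0|\ge|W_1|\ge\cdots\ge|W_{k-1}|$, so in fact $|W_j|<D/3$ for every $j$ with $-1\le j\le k-1$. Consequently every consecutive gap is small: $p_{i-1}-p_i = |W_{i-1}|/D < \tfrac13$ for all $i$ with $1\le i\le k-1$.

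Finally I would run a discrete intermediate-value argument. Since $p_0>\tfrac23$ while $p_{k-1}\le\tfrac12\le\tfrac23$, there is a smallest index $i$ with $p_i\le\tfrac23$, and it satisfies $1\le i\le k-1$. By minimality $p_{i-1}>\tfrac23$, and combining with the gap bound above, $p_i = p_{i-1}-|W_{i-1}|/D > \tfrac23-\tfrac13 = \tfrac13$. Hence $\tfrac13<p_i\le\tfrac23$, so this $i$ is the required index.

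The one real subtlety — the step I would flag as the crux — is that a priori the sequence $p_0,p_1,\dots,p_{k-1}$ could plunge from above $\tfrac23$ to below $\tfrac13$ in a single step, which would break the intermediate-value idea; such a plunge requires some $|W_{i-1}|$ to be large, but in the only case that needs work ($p_0>\tfrac23$) the monotonicity of the $|W_j|$ together with $|W_{-1}|<D/3$ forbids this. Everything else is bookkeeping with the formula of Lemma~\ref{lem:delta-ladder} and the inequalities of Lemmas~\ref{lem:ladder-monotonicity} and~\ref{lem:ladder-top}.
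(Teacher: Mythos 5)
Your proof is correct and is essentially the paper's own argument: both reduce to the observation that in the case $\Pr[y_0<x]>\frac23$ one has $|W_{-1}|<D/3$, hence by Lemma~\ref{lem:ladder-monotonicity} every gap $p_{i-1}-p_i=|W_{i-1}|/D$ is below $\frac13$, and then run a discrete intermediate-value argument anchored by Lemma~\ref{lem:ladder-top}. The only (cosmetic) difference is that you take the smallest index with $p_i\le\frac23$ and bound it below, whereas the paper takes the largest index with $p_i\ge\frac13$ and bounds it above.
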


\begin{proof}
If $\Pr[y_0<x]\le\frac23$, then we may take $i=0$. Otherwise, by Lemma~\ref{lem:ladder-monotonicity}, each set $W_i$ contains at most $1/3$ of the total number of orderings of~$\O$.
Let $i$ be the largest index for which $\Pr[y_i<x]\ge\frac13$. Then if $i={k-1}$, $\Pr[y_i<x]\le\frac12$ by Lemma~\ref{lem:ladder-top}; otherwise, $\Pr[y_i<x]\le\Pr[y_{i+1}<x]+\frac13\le\frac23$. Thus, in either case, $i$ satisfies the conditions of the lemma.
\end{proof}

\begin{theorem}
\label{thm:double-ladder-is-balanced}
Let $\O$ be a set of orderings that contains a double ladder $(X,Y)$. Then $\O$ is balanced.
\end{theorem}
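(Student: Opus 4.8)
The plan is to obtain Theorem~\ref{thm:double-ladder-is-balanced} as a short consequence of Lemma~\ref{lem:small-ladder-is-balanced}, using the symmetry built into the definition of a double ladder together with the elementary fact that, for any two distinct elements $a$ and $b$ of a set of orderings, $\Pr[a<b]+\Pr[b<a]=1$, since every ordering places $a$ before $b$ or $b$ before $a$ and never both.

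First I would examine the two leading elements $x_0$ and $y_0$ of the sequences $X$ and $Y$. By the complementarity fact applied to this pair, at least one of $\Pr[y_0<x_0]$ and $\Pr[x_0<y_0]$ is at least $\tfrac12$. The hypothesis that $(X,Y)$ is a double ladder is symmetric under interchanging $X$ and $Y$ --- it asserts that $(x_0,Y)$ is a ladder and that $(y_0,X)$ is a ladder --- so without loss of generality I may assume $\Pr[y_0<x_0]\ge\tfrac12$ and work with the ladder $(x_0,Y)$.

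The remaining step is simply to invoke Lemma~\ref{lem:small-ladder-is-balanced} for this ladder. Its hypotheses hold: $(x_0,Y)$ is a ladder for $\O$, and $\Pr[y_0<x_0]\ge\tfrac12$. The lemma then yields an index $i$ with $0\le i<k$ for which $\tfrac13\le\Pr[y_i<x_0]\le\tfrac23$; equivalently, $y_i$ and $x_0$ form a balanced pair, so $\O$ is balanced.

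Since this is little more than assembling the preceding lemmas, I do not expect a real obstacle. The only point that deserves a moment's care is the legitimacy of the ``without loss of generality'': one should confirm that $(x_0,Y)$ and $(y_0,X)$ really do enter the argument in interchangeable roles, and that $x_0\ne y_0$ (so that $\Pr[y_0<x_0]+\Pr[x_0<y_0]=1$ is available), which holds because the distinguished element of a ladder does not occur among the entries of its accompanying sequence while $x_0$ is an entry of $X$.
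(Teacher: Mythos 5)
Your proposal is correct and follows essentially the same route as the paper's own proof: both observe that one of $\Pr[y_0<x_0]$, $\Pr[x_0<y_0]$ must be at least $\tfrac12$ and then apply Lemma~\ref{lem:small-ladder-is-balanced} to the corresponding ladder $(x_0,Y)$ or $(y_0,X)$. Your extra remark verifying $x_0\ne y_0$ is a reasonable (if minor) point of care that the paper leaves implicit.
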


\begin{proof}
If $\Pr[y_0<x_0]\ge\frac12$ then the ladder $(x_0,Y)$ satisfies the conditions of Lemma~\ref{lem:small-ladder-is-balanced}, and otherwise the ladder $(y_0,X)$ satisfies the conditions of the lemma. In either case the lemma ensures the existence of a balanced pair.
\end{proof}

\subsection{Initial and final elements}

Define an \emph{initial element} of a set of orderings $\O$ to be an element $x$ with the property that, in any ordering of $\O$ in which $x$ is not already the first element, $x$ may be swapped with its immediate predecessor to form another ordering of $\O$. Symmetrically, define a \emph{final element} of $\O$ to be an element $y$ with the property that, in any ordering of $\O$ in which $y$ is not already the last element, $y$ may be swapped with its immediate successor to form another ordering of~$\O$. For instance, in an antimatroid $\A$, $x$ is an initial element if $\{x\}$ is a path of $\A$, and $y$ is a final element if it is the endpoint of every path containing it. As a special case, in any set of orderings, the elements that are both initial and final are exactly the independent elements. As we will show, sets of orderings with many initial elements, or symmetrically with many final elements, are necessarily balanced.

We define the \emph{rank} of an element $x$ in an ordering $w$ to be the position of $x$ in $w$, numbering the first symbol of $w$ as position one, etc. We define the \emph{expected rank} $\bar r(x)$ (for a particular set of orderings) to be the expected value of the rank of $x$ in an ordering drawn uniformly at random from $\O$. Given two distinct elements $x$ and $y$, and two integers $i$ and $j$, we define $f_{x,y}(i,j)$ to be the probability that, in a uniformly random ordering, $x$ has rank $i$ and $y$ has rank $j$.
Obviously, $f_{x,y}(i,i)=0$ for all $i$ and all $x\ne y$, because only one of the two elements may have rank~$i$ in any particular ordering.

\begin{lemma}
\label{lem:diagonal-equality}
For any two initial elements $x$ and $y$, and any $i$, $f_{x,y}(i,i+1)=f_{x,y}(i+1,i)$.
\end{lemma}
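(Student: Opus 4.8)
The plan is to set up a bijection between the orderings contributing to $f_{x,y}(i,i+1)$ and those contributing to $f_{x,y}(i+1,i)$, given by the operation of swapping the two adjacent elements $x$ and $y$. First I would fix $i$ and take any ordering $w\in\O$ in which $x$ has rank $i$ and $y$ has rank $i+1$; then $x$ and $y$ occupy consecutive positions, with $x$ the immediate predecessor of $y$. Since $y$ is an initial element and $y$ is not the first element of $w$ (it has rank $i+1\ge 2$), the definition of an initial element guarantees that swapping $y$ with its immediate predecessor $x$ produces another ordering $w'\in\O$. In $w'$, the element $y$ has rank $i$ and $x$ has rank $i+1$, so $w'$ contributes to $f_{x,y}(i+1,i)$.

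Next I would check that this map is a bijection. It is clearly injective, since swapping the same two adjacent positions recovers $w$ from $w'$. For surjectivity, the same argument applied with the roles of $x$ and $y$ reversed — using that $x$ is also an initial element — shows that every ordering counted by $f_{x,y}(i+1,i)$ arises this way: given $w'$ with $x$ at rank $i+1$ and $y$ at rank $i$, these are consecutive positions with $y$ immediately before $x$, and since $x$ is an initial element not in the first position, swapping $x$ with its predecessor $y$ yields an ordering of $\O$ that the original map sends back to $w'$. Hence the two sets of orderings have equal cardinality, and since the random ordering is uniform, $f_{x,y}(i,i+1)=f_{x,y}(i+1,i)$.

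I do not expect any serious obstacle here; the only point requiring a little care is the boundary condition that the swap is legal, i.e.\ that neither $x$ nor $y$ is being asked to move past the start of the ordering. This is automatic because in either configuration the element being swapped backward has rank $i+1\ge 2$, so it genuinely has an immediate predecessor and the initial-element property applies. One should also note that we only ever need one of $x$, $y$ to be initial for each direction of the bijection, but the hypothesis supplies both, so both directions go through.
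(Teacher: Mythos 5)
Your proof is correct and is essentially the paper's own argument: the paper also establishes the equality by swapping the two adjacent initial elements to set up a bijection between the two events. Your version just spells out the injectivity/surjectivity and the boundary condition more explicitly.
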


\begin{proof}
In the event that $x$ has rank $i$ and $y$ has rank $i+1$, these two initial elements are adjacent in the ordering, and can be swapped to produce an equally likely event that $x$ has rank $i+1$ and $y$ has rank $i$.
\end{proof}

\begin{lemma}
\label{lem:monotonicity}
For any two initial elements $x$ and $y$, any $i>1$, and any $j\ne i+1$, $f_{x,y}(i,j-1)\ge f_{x,y}(i,j)$.
\end{lemma}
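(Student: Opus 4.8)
The plan is to prove the inequality by exhibiting an injection from the set of orderings underlying $f_{x,y}(i,j)$ into the set underlying $f_{x,y}(i,j-1)$, using only the hypothesis that $y$ is an initial element. First I would dispose of the degenerate values of $j$: if $j=i$ then $f_{x,y}(i,j)=0$ and the inequality is immediate, and the case $j=1$ falls outside the intended range because then $j-1$ is not a rank (equivalently, $y$ has no predecessor to swap with). So assume $j\ge 2$ and $j\notin\{i,i+1\}$. Let $W$ be the set of orderings $w\in\O$ in which $x$ has rank $i$ and $y$ has rank $j$, and let $W'$ be the set of orderings in which $x$ has rank $i$ and $y$ has rank $j-1$; then $f_{x,y}(i,j)=|W|/|\O|$ and $f_{x,y}(i,j-1)=|W'|/|\O|$, so it is enough to show $|W|\le|W'|$.

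For $w\in W$, since $j\ge 2$ the element $y$ is not first in $w$, so it has an immediate predecessor $z$, at rank $j-1$; since $j\ne i+1$ we have $z\ne x$. Because $y$ is initial, transposing $y$ and $z$ yields an ordering $\phi(w)\in\O$, which is exactly the ordering obtained from $w$ by swapping the symbols in positions $j-1$ and $j$. I would then verify that $\phi(w)\in W'$: in $\phi(w)$ the element $y$ has rank $j-1$, and since $i\notin\{j-1,j\}$ the transposition of positions $j-1$ and $j$ does not disturb $x$, so $x$ retains rank $i$. Finally, $\phi$ is injective because swapping the symbols in positions $j-1$ and $j$ is an involution on the set of all orderings of the ground set: if $\phi(w_1)=\phi(w_2)$, applying the same positional swap recovers $w_1=w_2$. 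Hence $|W|\le|W'|$, which is the claimed inequality.

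The whole argument is a single injection, so there is no real obstacle; the only thing to get right is the bookkeeping --- namely that $j\ne i+1$ is exactly the condition that keeps $x$ out of the two positions being transposed (so that $x$'s rank, and hence its membership in the correct fibre, is preserved), and that the genuinely degenerate values of $j$ are either trivial ($j=i$) or outside the scope of the statement ($j=1$). I expect this small case analysis, rather than any substantive difficulty, to be the main point of care. Note also that the hypothesis that $x$ is initial is not needed for this step, only that $y$ is.
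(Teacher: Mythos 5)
Your proof is correct and is essentially the paper's own argument: the paper's one-line proof also swaps $y$ with its immediate predecessor (legal since $y$ is initial, and the predecessor is not $x$ because $j\ne i+1$) to get an at-least-as-likely event with $y$ at rank $j-1$ and $x$ unchanged. Your version just makes the injectivity and the degenerate cases $j=i$ and $j=1$ explicit, and correctly observes that only $y$ needs to be initial.
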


\begin{proof}
In the event that $x$ has rank $i$ and $y$ has rank $j$, $y$ may be swapped for its predecessor, producing another event that is therefore at least as likely in which $y$ has rank $j-1$ and the rank of $x$ is unchanged.
\end{proof}

\begin{lemma}
\label{lem:rank-ub}
Let $x$ and $y$ be two distinct initial elements of a set of orderings $\O$ with $n$ elements, and let $x\prec y$.
Then $\bar r(x)<\frac{n+1}{3}$.
\end{lemma}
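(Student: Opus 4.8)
The plan is to analyze the ranks of the \emph{pair} $\{x,y\}$ rather than of $x$ alone, since the hypothesis $x\prec y$ only constrains this pair, and to feed in the two monotonicity facts already established. I would start from the elementary identity
$$\bar r(x)=\mathrm{E}\bigl[\min\{r(x),r(y)\}\bigr]+\mathrm{E}\bigl[(r(x)-r(y))^{+}\bigr],$$
which holds because $\min\{a,b\}+(a-b)^{+}=a$; here $(z)^{+}=\max\{z,0\}$. The first term involves only the symmetric ``both initial'' structure, and the second term is supported entirely on the event $y<x$, which by $x\prec y$ has probability less than $\tfrac13$.

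For the first term I would prove that the joint law of $(r(x),r(y))$ is pushed toward small ranks in the strong form: for every $m$,
$$\Pr\bigl[r(x)\ge m \text{ and } r(y)\ge m\bigr]\le \binom{n-m+1}{2}\bigg/\binom{n}{2},$$
that is, two initial elements land in the last $n-m+1$ positions no more often than two uniformly chosen distinct positions would. This is exactly where Lemmas~\ref{lem:diagonal-equality} and~\ref{lem:monotonicity} do the work: sliding $x$ and then $y$ one step toward the front repeatedly gives injective comparisons of the probabilities $f_{x,y}(\cdot,\cdot)$, and the reflection identity of Lemma~\ref{lem:diagonal-equality} lets one step across the forbidden diagonal $r(x)=r(y)$. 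Summing over $m$ and using $\sum_{m}\binom{n-m+1}{2}/\binom{n}{2}=(n+1)/3$ gives $\mathrm{E}[\min\{r(x),r(y)\}]\le(n+1)/3$, with equality only in the fully symmetric case $\Pr[x<y]=\tfrac12$; since we assume $x\prec y$, the inequality is strict, and I would make the deficit explicit as a function of $\tfrac12-\Pr[y<x]$.

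For the second term I would bound $\mathrm{E}[(r(x)-r(y))^{+}]$ by that same deficit. On the event $y<x$ we have $r(x)-r(y)\le r(x)-1$, and sliding $x$ repeatedly one position to the left is legal because $x$ is initial, so the contribution of the orderings with $r(x)-r(y)\ge m$ shrinks at exactly the rate that frees the slack found in the previous paragraph; adding the two estimates should give $\bar r(x)<(n+1)/3$. As a sanity check, using only that $x$ is initial yields the non-increasing rank profile $\Pr[r(x)=i]\ge\Pr[r(x)=i+1]$ and hence merely $\bar r(x)\le(n+1)/2$, so the whole improvement from $\tfrac12$ to $\tfrac13$ must come from the pair hypothesis, which is why both monotonicity lemmas together with the strictness of $x\prec y$ have to be used.

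The step I expect to be the real obstacle is the quantitative coordination of the two terms. Each of $\mathrm{E}[\min\{r(x),r(y)\}]$ and $\mathrm{E}[(r(x)-r(y))^{+}]$ can, on its own, be as large as $(n+1)/3$ (and the latter can look even larger under crude bounds), so showing that their \emph{sum} stays below $(n+1)/3$ is not a matter of two independent estimates: one must track that the probability mass that $x\prec y$ removes from the ``anti-corner'' of the $(r(x),r(y))$-grid is precisely the mass that would otherwise make $(r(x)-r(y))^{+}$ large. Getting this single accounting right, rather than double-counting or losing a constant factor, is the crux.
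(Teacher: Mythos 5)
Your decomposition $\bar r(x)=\mathrm{E}\bigl[\min\{r(x),r(y)\}\bigr]+\mathrm{E}\bigl[(r(x)-r(y))^{+}\bigr]$ is genuinely different from the paper's argument, but as written it has two unfilled gaps, the second of which you flag yourself without resolving. First, the tail bound $\Pr[r(x)\ge m\wedge r(y)\ge m]\le\binom{n-m+1}{2}/\binom{n}{2}$ does not follow from the sliding argument you sketch: repeatedly applying Lemmas~\ref{lem:diagonal-equality} and~\ref{lem:monotonicity} shows that each cell $(i,j)$ of the corner is dominated by its translate $(i-m+1,j-m+1)$, which only yields $\Pr[r(x)\ge m\wedge r(y)\ge m]\le\Pr[r(x)\ge 1\wedge r(y)\ge 1]=1$; to get the \emph{ratio} of cell counts you need an averaging argument in which each corner cell is compared against roughly $n(n-1)/\bigl((n-m+1)(n-m)\bigr)$ distinct cells, and that covering is never constructed. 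Second, and more seriously, the inequality ``$\mathrm{E}[(r(x)-r(y))^{+}]\le$ deficit'' cannot be a soft consequence of $x\prec y$: in the fully symmetric case $\Pr[y<x]=\frac12$ the deficit is $0$ while $\mathrm{E}[(r(x)-r(y))^{+}]=(n+1)/6$, so the trade-off you need is a quantitative function of $\frac12-\Pr[y<x]$ that degenerates exactly at the boundary you are perturbing away from. You correctly identify this coordination as the crux, but you do not carry it out, so the proposal is a plan rather than a proof.

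For contrast, the paper avoids the two-term bookkeeping entirely. It sets $g(i)=\sum_{j\le i}f_{x,y}(i+1,j)$ (so that $\sum_i g(i)=\Pr[y<x]$) and $h(i)=g(i)+\sum_{j>i}f_{x,y}(i,j)$, verifies the identity $\bar r(x)=\sum_i\bigl(g(i)+ih(i)\bigr)$ term by term, and uses Lemmas~\ref{lem:diagonal-equality} and~\ref{lem:monotonicity} exactly once to show that every term of $h(i)$ outside $g(i)$ is at most the smallest term of $g(i)$, whence $h(i)\le ng(i)/i$. This delivers the stronger linear estimate $\bar r(x)\le(n+1)\Pr[y<x]$, which is tight at $\Pr[y<x]=\frac12$ and gives the lemma immediately. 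If you want to salvage your route, the right target is a bound of that linear form---an estimate on $\mathrm{E}[\min\{r(x),r(y)\}]$ proportional to $\Pr[y<x]$ rather than the absolute bound $(n+1)/3$---since only a bound that vanishes with $\Pr[y<x]$ can absorb the positive-part term.
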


\begin{proof}
For any integer $i$, define
$$g(i) = \sum_{1\le j\le i} f_{x,y}(i+1,j)$$
and
$$h(i) = \sum_{1\le j\le i} f_{x,y}(i+1,j) + \sum_{i<j\le n} f_{x,y}(i,j).$$
Then $g(i)$ is a sum of $i$ terms, while $h(i)$ is a sum of $n$ terms. By Lemmas~\ref{lem:diagonal-equality} and~\ref{lem:monotonicity}, each term in $h(i)$ that is not already present in $g(i)$ is at most equal to the smallest term in $g(i)$, and therefore at most equal to the average value in $g(i)$. Expressing this as an inequality, we have
$h(i)\le n g(i)/i$.

The probability $\Pr[x>y]$ is $\sum g(i)<\frac13$ by the assumption that $x\prec y$. We can also express the expected rank of $x$, as
$$\bar r(x)=\sum_{i,j} if_{x,y}(i,j) = \sum_i g(i) + i h(i);$$
the two summations in this formula are equal, as may be seen by comparing the contributions of each individual term $f_{x,y}(i,j)$ to each of them.

Combining this expression with the two previously observed inequalities, we have
$$\bar r(x)=\sum_i g(i) + i h(i)\le \sum (n+1)g(i)=(n+1)\sum g(i)<\frac{n+1}{3}.$$
\end{proof}

\begin{lemma}
\label{lem:rank-lb}
Let $x$ be an initial element of a set of orderings $\O$ that is preceded in relation $\prec$ by $i\ge 2$ other initial elements, and suppose that relation $\prec$ is total in $\O$. Then $\bar r(x)> (2i+5)/3$.
\end{lemma}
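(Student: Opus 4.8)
This is the lower-bound counterpart of Lemma~\ref{lem:rank-ub}: there, a single relation $x\prec y$ forces $\bar r(x)$ below $(n+1)/3$; here the relations run the other way and should force $\bar r(x)$ to be large. The plan is to re-use the same two tools as that proof---the monotonicity of the two-element rank distributions $f_{x,y}$ supplied by Lemmas~\ref{lem:diagonal-equality} and~\ref{lem:monotonicity}---but now to extract a lower bound on $\bar r(x)$.

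First I would use the totality of $\prec$ to reduce to a single essential case. Every element other than $x$ is $\prec$-comparable to $x$, so the elements below $x$ in $\prec$ include the $i$ given initial ones and possibly more. If $x$ has at least $i+1$ elements below it in $\prec$, then, writing $\bar r(x)=1+\sum_{w}\Pr[w<x]$ and using $\Pr[w<x]>\frac23$ for each such $w$, we get $\bar r(x)>1+\frac{2(i+1)}{3}=\frac{2i+5}{3}$ and are done. Thus it suffices to treat the case in which the $i$ initial elements are exactly the $\prec$-predecessors of $x$; renaming elements in $\prec$-order, this says $u_1\prec u_2\prec\cdots\prec u_i\prec x\prec u_{i+2}\prec\cdots$ with $u_1,\dots,u_i,x$ all initial, i.e.\ $x$ sits immediately above an initial $\prec$-chain that forms the bottom of the total order.

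In this remaining case I would induct on $i$, pairing $x$ with its greatest initial predecessor $u_i$. The aim of the inductive step is an inequality of the shape $\bar r(x)\ge\bar r(u_i)+\frac23$; combined with the inductive bound $\bar r(u_i)>\frac{2(i-1)+5}{3}$ (valid once $i-1\ge2$, since $u_i$ is initial, is preceded by the $i-1$ initial elements $u_1,\dots,u_{i-1}$, and $\prec$ is total) this gives $\bar r(x)>\frac{2(i-1)+5}{3}+\frac23=\frac{2i+5}{3}$. For the key inequality I would write $\bar r(x)-\bar r(u_i)=\bigl(2\Pr[u_i<x]-1\bigr)+\sum_{w\ne x,u_i}\bigl(\Pr[w<x]-\Pr[w<u_i]\bigr)$; the first term exceeds $\frac13$ because $u_i\prec x$, and the remaining sum would be controlled by splitting the other elements according to their $\prec$-position relative to $u_i$ and $x$ and applying monotonicity of the relevant $f_{x,u_i}$, $f_{u_i,w}$ and $f_{x,w}$ (the idea being that an element just below $u_i$ in $\prec$ tends to overtake $u_i$ but not $x$, and so contributes positively). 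The base case $i=2$---namely $\bar r(x)>3$ for an initial $x$ lying atop exactly two initial elements $u_1\prec u_2$ with $\prec$ total---I would handle directly, using $\Pr[\mathrm{rank}(x)=1]<\frac13$, $\Pr[\mathrm{rank}(x)\le2]<\frac23$ (rank at most $2$ lets at most one of $u_1,u_2$ precede $x$, while $\Pr[\text{both precede }x]>\frac13$), and the rank-distribution monotonicity to limit how far the remaining mass of $x$'s rank can be pushed toward~$3$.

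The main obstacle is the final $\frac23$ of slack: the crude estimate $\bar r(x)=1+\sum_w\Pr[w<x]>1+\frac{2i}{3}$ only reaches $\frac{2i+3}{3}$, so everything hinges on squeezing out one more $\frac23$, and---just as in Lemma~\ref{lem:rank-ub}, where $x\prec y$ alone does not suffice and one genuinely uses that $x$ is initial---this must come from the interaction of initiality with the monotonicity lemmas rather than from the $\prec$-relations by themselves. Checking that the cross-term sum in the inductive step is really at least $\frac13$ (it is not a sum of nonnegative terms) and pinning down the base case are where the actual work lies.
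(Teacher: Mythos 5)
Your reduction to the case where the $\prec$-predecessors of $x$ are exactly the $i$ initial elements is fine, but from that point on the proposal diverges from what can actually be carried out, and the two steps you flag as ``where the actual work lies'' are genuine gaps rather than routine verifications. The paper's proof is direct, not inductive: it writes $\bar r(x)$ as one plus the expected number of elements preceding $x$, collects $\frac{2i}{3}$ from the initial predecessors, and then earns the missing $\frac23$ by showing that the expected number of \emph{non-initial} elements preceding $x$ exceeds $\frac23$. It does this via two events with disjoint witnesses: $E$, that some non-initial element lies strictly between $y$ (the immediate $\prec$-predecessor of $x$) and $x$, and $F$, that some non-initial element lies strictly between $z$ (the immediate predecessor of $y$) and $y$; each has probability more than $\frac13$, since otherwise $x$ (respectively $y$), being initial, could too often be swapped leftward past the intervening initial elements, contradicting $y\prec x$ (respectively $z\prec y$). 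Your inductive inequality $\bar r(x)\ge\bar r(u_i)+\frac23$ is never established: after extracting $2\Pr[u_i<x]-1>\frac13$ you still need the cross-term sum $\sum_{w\ne x,u_i}\bigl(\Pr[w<x]-\Pr[w<u_i]\bigr)$ to be at least $\frac13$, but every one of its terms can individually be negative (for $u_j$ with $j<i$ both probabilities lie in $(\frac23,1]$; for $w\succ x$ both lie in $[0,\frac13)$), and Lemmas~\ref{lem:diagonal-equality} and~\ref{lem:monotonicity} compare the joint rank distribution of two \emph{initial} elements only, so they give no handle on $\Pr[w<x]$ versus $\Pr[w<u_i]$ when $w$ is not initial.

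The base case is where the shortfall is easiest to see. In your reduced setting with $i=2$ you have $\bar r(x)=1+\Pr[u_1<x]+\Pr[u_2<x]+\sum_{w\succ x}\Pr[w<x]$, and the first three terms sum to at most $3$; so $\bar r(x)>3$ \emph{cannot} be proved from information about $u_1$, $u_2$, and $x$ alone --- you must show that elements above $x$ in $\prec$, each of which precedes $x$ with probability less than $\frac13$, nevertheless contribute a positive expected amount. Concretely, the estimates you list ($\Pr[\mathrm{rank}(x)=1]<\frac13$ and $\Pr[\mathrm{rank}(x)\le 2]<\frac23$) yield only $\bar r(x)>1\cdot\frac13+2\cdot\frac13+3\cdot\frac13=2$, a full unit short of the target. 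The extra $\frac23$ in the lemma is precisely a statement about non-initial elements being forced to appear before $x$, which is the content of the paper's events $E$ and $F$ and which your proposal never engages with. To repair the argument you would need to import that mechanism into both the base case and the inductive step, at which point the induction buys you nothing over the paper's direct computation.
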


\begin{proof}
The expected rank of $x$ is one plus the expected number of elements of $\O$ that precede $x$ in a uniformly random ordering. If $S$ is the set of initial elements that precede $x$ in relation $\prec$, then the expected contribution of $S$ to the number of elements that precede $x$ in a random ordering is
$$\sum_{y\in S}\Pr[y<x] > \frac{2i}{3}.$$
Adding the one unit for the fact that the rank is one even when there are zero elements preceding $x$, we obtain a total of $(2i+3)/3$.

To obtain the remaining 2/3 of a unit in the expected rank given in the statement of the lemma, let $y$ be the immediate predecessor of $x$ in $\prec$, and let $z$ be the immediate predecessor of $y$ in $\prec$ (this is where we use the assumption that $i\ge 2$). Let $E$ be the event that $y$ precedes $x$ in an ordering and that at least one non-initial element occurs between $y$ and $x$ in the word. Similarly, let $F$ be the event that $z$ precedes $y$ in an ordering and that at least one non-initial element occurs between $z$ and $y$ in the word. In any word for which $E$ is not true, either $x$ precedes $y$ or $x$ may be swapped for $y$ to produce another word in which $x$ precedes $y$; by the assumption that $y\prec x$, either of these events must happen with probability less than $1/3$, so $\Pr[E]>\frac13$, and by a similar argument $\Pr[F]>\frac13$. But whenever $E$ or $F$ happens, there is a non-initial element earlier than $x$, and the non-initial elements that cause $E$ to happen are disjoint from the non-initial elements that cause $F$ to happen. Therefore, the expected number of non-initial elements earlier than $x$ is at least $\Pr[E]+\Pr[F]>\frac23$. Adding this to the already-obtained total of $(2i+3)/3$ for the initial rank of one and the expected number of initial elements earlier than $x$ produces the result.
\end{proof}

\begin{theorem}
\label{thm:many-initial}
Let $\O$ be a set of orderings with at least seven elements, in which either at least half of the elements are initial or at least half of the elements are final. Then $\O$ is balanced.
\end{theorem}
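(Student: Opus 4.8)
The plan is to argue by contradiction: suppose $\O$ has no balanced pair, so that by Lemma~\ref{lem:prec-is-total} the relation $\prec$ is an irreflexive total order on the $n$ elements of $\O$. First I would reduce the ``final'' half of the hypothesis to the ``initial'' half by symmetry: reversing every ordering in $\O$ produces a set of orderings in which final elements become initial elements and each probability $\Pr[x<y]$ becomes $\Pr[y<x]$, so this reversed family is balanced if and only if $\O$ is. Hence it suffices to treat the case in which at least half of the elements of $\O$ are initial. Write $m$ for the number of initial elements, so $m\ge\lceil n/2\rceil$, and observe that $n\ge 7$ forces $m\ge 4$.

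The key step is to choose a single initial element $x$ to which both rank estimates of the preceding lemmas apply simultaneously. List the initial elements in the order given by $\prec$ as $z_0\prec z_1\prec\cdots\prec z_{m-1}$ and take $x=z_{m-2}$. On one hand, $x$ is preceded in $\prec$ by the $m-2$ initial elements $z_0,\dots,z_{m-3}$, and $m-2\ge 2$, so Lemma~\ref{lem:rank-lb} applies and gives $\bar r(x)>\frac{2(m-2)+5}{3}=\frac{2m+1}{3}$. On the other hand, $x\prec z_{m-1}$ and both are distinct initial elements, so Lemma~\ref{lem:rank-ub} applies and gives $\bar r(x)<\frac{n+1}{3}$. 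Combining the two inequalities yields $2m+1<n+1$, that is $2m<n$, which contradicts $m\ge\lceil n/2\rceil\ge n/2$. This contradiction shows that $\O$ has a balanced pair, hence is balanced.

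I do not anticipate a real obstacle: Lemmas~\ref{lem:rank-ub} and~\ref{lem:rank-lb} already carry all of the analytic content, and the remainder is bookkeeping. The one point demanding care is the choice of the distinguished element. It must be $\prec$-large enough among the initial elements to have two initial predecessors (so that Lemma~\ref{lem:rank-lb} can be invoked) yet $\prec$-small enough to still have an initial successor (so that Lemma~\ref{lem:rank-ub} can be invoked), and $z_{m-2}$ is precisely the element threading this needle. Verifying that the hypothesis $n\ge 7$ is exactly what is needed to guarantee $m\ge 4$, and hence $m-2\ge 2$, is the only place the numerical assumptions enter, and it is worth double-checking that the final inequality $2m<n$ is contradicted for both parities of $n$.
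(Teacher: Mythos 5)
Your proof is correct and follows essentially the same route as the paper: reduce to the initial-element case by reversing all orderings, assume unbalancedness so that $\prec$ is total by Lemma~\ref{lem:prec-is-total}, take $x$ to be the second-highest initial element under $\prec$, and derive a contradiction by sandwiching $\bar r(x)$ between the bounds of Lemmas~\ref{lem:rank-lb} and~\ref{lem:rank-ub}. Your bookkeeping in terms of $m$ is in fact slightly cleaner than the paper's, which substitutes $n/2$ directly and glosses over the parity of $n$.
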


\begin{proof}
Assume without loss of generality that at least half of the elements are initial; otherwise, reversing all of the orderings in $\O$ swaps the roles of the initial and final elements without changing whether $\O$ is balanced.

Assume for a contradiction that $\O$ is unbalanced, let $x$ be the second-highest initial element in the total ordering given by $\prec$ according to Lemma~\ref{lem:prec-is-total}, and let $n$ be the number of elements in $\O$. Then by Lemma~\ref{lem:rank-ub}, $\bar r(x)<(n+1)/3$, while by Lemma~\ref{lem:rank-lb}, $\bar r(x)>(2(n/2-2)+5)/3=(n+1)/3$. The contradiction shows that $\O$ must be balanced.
\end{proof}

\section{Antimatroids in general}

\begin{figure}[t]
\centering\includegraphics[width=\textwidth]{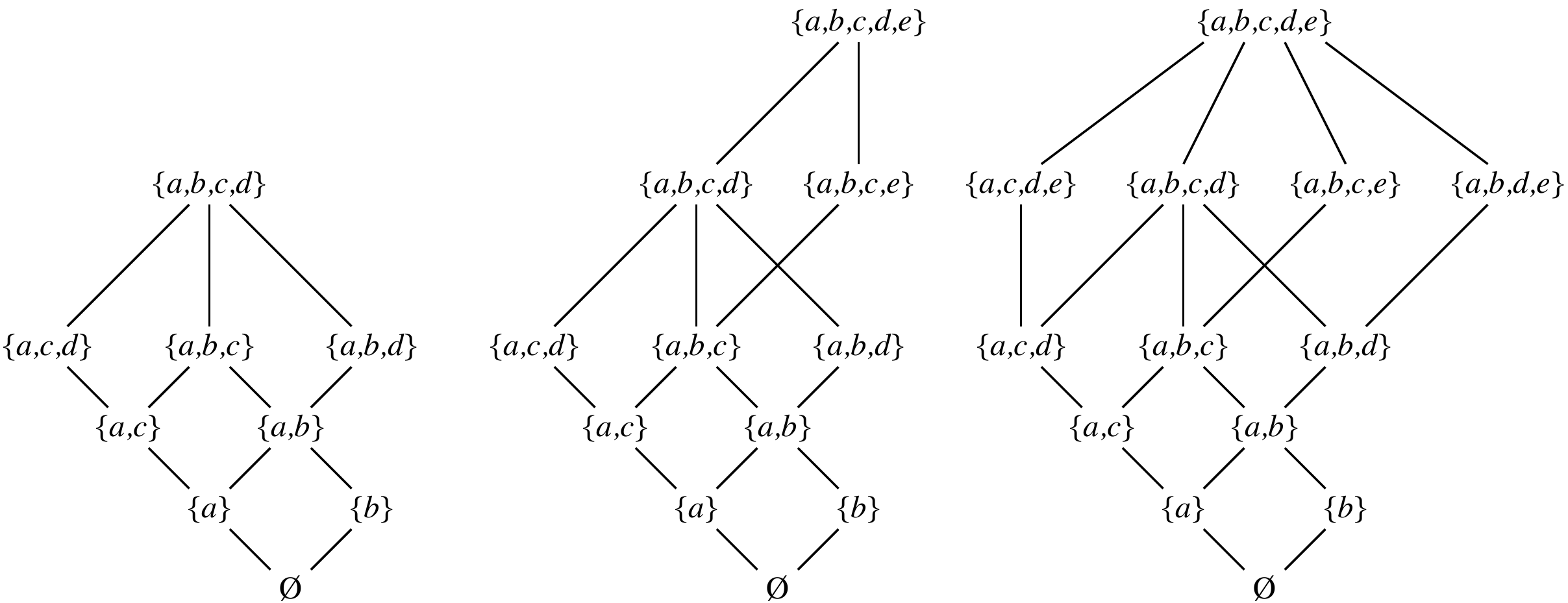}
\caption{Three antimatroids with balance exactly~$1/3$.}
\label{fig:extremes}
\end{figure}

\subsection{Computational enumeration}
In order to test Conjecture~\ref{conj} without making any additional assumptions on the type of antimatroid being tested, we performed a computer search for small antimatroids, aided by the following lemma:

\begin{lemma}
\label{lem:rev-search-parent}
Let $\A$ be an antimatroid that is not the power set of its elements. Then there is a set $S\notin\A$ such that $\A\cup\{S\}$ is also an antimatroid.
\end{lemma}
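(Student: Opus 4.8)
The plan is to show that the family of antimatroids on a fixed ground set, ordered by inclusion of set families, has the property that any element other than the top (the full power set) has a strictly larger neighbor that is still an antimatroid; this is exactly what enables a reverse-search enumeration. First I would recall the two defining closure properties of an antimatroid $\A$: accessibility and closure under unions. Since $\A$ is not the power set of its ground set $E = \bigcup\A$, the family of sets \emph{not} in $\A$ is nonempty, and I want to locate within it a set $S$ that can be added to $\A$ without destroying either property.

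The key step is to choose $S$ to be a minimal element, under inclusion, among the sets in $2^E \setminus \A$. I would then argue two things about $\A' = \A \cup \{S\}$. For accessibility: because $S$ is minimal outside $\A$, every proper subset $T \subsetneq S$ lies in $\A$; in particular, removing any one element from $S$ yields a set in $\A \subseteq \A'$ (note $S$ is nonempty since $\emptyset \in \A$), so the accessibility condition is met for $S$, and it continues to hold for every old member of $\A$ since their witnesses $S'\setminus\{x\}$ are still present. For closure under unions: I need that no union involving $S$ escapes $\A'$. A union of two old members of $\A$ is still in $\A$. A union $S \cup A$ with $A \in \A$, or $S \cup S = S$, must be shown to lie in $\A'$ — either it equals $S$ (if $A \subseteq S$, which forces $A \in \A$ and the union is $S \in \A'$) or it strictly contains $S$. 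Here is the crux: I must rule out the possibility that $S \cup A$ is a \emph{new} set strictly larger than $S$ and outside $\A$. This does not follow from minimality of $S$ alone, so the single-candidate approach needs refinement.

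The fix, and the step I expect to be the main obstacle, is to pick $S$ more cleverly: instead of just any minimal set outside $\A$, I would take $S$ to be minimal outside $\A$ subject to also being a \emph{maximal} such set isn't right either — rather, the right move is to iterate. Concretely, if some union $S \cup A$ is again outside $\A$, replace $S$ by that larger set and repeat; since the ground set is finite this terminates, and one checks the terminal set $S^*$ still has all its one-element deletions in $\A$ (using accessibility of the antimatroid $\mathcal{A}^c = $ the smallest antimatroid containing $\A \cup \{S^*\}$, or a direct argument that the union-closure of $\A\cup\{S\}$ differs from $\A$ only in sets containing $S$ when $S$ is chosen minimal and then closed up). Cleanly, I would instead take $S$ minimal outside $\A$, form $\A'' = $ the closure of $\A \cup \{S\}$ under unions, observe $\A''$ is union-closed by construction and that every set in $\A'' \setminus \A$ contains $S$ hence contains some $T = S \setminus \{x\} \in \A$ as well as, by an inductive peeling argument, a full accessibility chain; then argue $\A''$ is accessible, so it is an antimatroid strictly containing $\A$, and any minimal member of $\A'' \setminus \A$ (which necessarily equals $S$, since $S$ is the unique minimal new set) can be adjoined. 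The delicate verification is precisely that the union-closure adds only "harmless" sets, i.e., that adjoining one minimal missing set and closing under unions does not force us to also repair accessibility in some uncontrollable way; handling this finiteness/induction bookkeeping carefully is where the real work lies.
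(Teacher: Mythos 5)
There is a genuine gap. You correctly identify the crux---that adjoining a single minimal set $S\notin\A$ can break closure under unions---but none of your proposed fixes resolves it, and the final ``clean'' version is circular. Concretely: taking $S$ minimal outside $\A$ and forming the union-closure $\A''$ of $\A\cup\{S\}$ does give an antimatroid (it is the join of $\A$ with the power set of $S$, all of whose proper subsets lie in $\A$ by minimality), but your concluding step---that the unique minimal member of $\A''\setminus\A$, namely $S$ itself, ``can be adjoined''---is exactly the claim you set out to prove and is false for that choice of $S$. Counterexample: on ground set $\{a,b,c\}$ let $\A=\bigl\{\emptyset,\{a\},\{b\},\{a,b\},\{a,b,c\}\bigr\}$, which is accessible and union-closed. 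The unique minimal set outside $\A$ is $\{c\}$, yet $\A\cup\bigl\{\{c\}\bigr\}$ is not union-closed because $\{c\}\cup\{a\}=\{a,c\}$ is missing. The sets that actually work here are $\{a,c\}$ or $\{b,c\}$, neither of which is minimal outside $\A$; so the correct $S$ must in general be a non-minimal missing set, and identifying it requires more structure than minimality provides. Your iterative fallback (``if $S\cup A$ is outside $\A$, replace $S$ by it and repeat'') also fails as stated: enlarging $S$ can destroy the property that all one-element deletions of $S$ lie in $\A$, so accessibility is no longer free, and you give no argument for it at the terminal set.

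The paper's proof takes a different and more delicate route: it builds a chain $X_0\supset X_1\supset\cdots$ down from the full element set by accessibility, locates the first index $j$ at which the interval $[X_j,X_0]$ in $\A$ is not a full Boolean lattice, and then either recurses on a contracted antimatroid (when $X_0\setminus\{x\}\in\A$) or directly constructs $S=T\cup\{y\}$ where $T$ is the union of all supersets of $X_j$ in $\A$ avoiding the critical element $x$. In both cases the choice is engineered so that every union of $S$ with a set of $\A$ lands either back at $S$ or inside a full Boolean interval already contained in $\A$---which is precisely the mechanism your proposal lacks. To repair your argument you would need an analogous device for steering the added set into a region where union-closure is automatic.
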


\begin{proof}
We apply induction on the number of elements of $\A$.
Let $X_0$ be the set of elements of $\A$, and (for each $i>0$) let $X_i$ be a set in $\A$ formed by removing an element from $X_{i-1}$. If $\A$ is not the power set of its elements, let $j$ be the smallest number such that the interval between $X_j$ and $X_0$ has fewer than $2^j$ sets in it, and let $x$ be the element removed from $X_{j-1}$ to form $X_j$.

If $X_0\setminus\{x\}$ belongs to $\A$, let $\A'$ be the antimatroid $\{T\mid (T\cap X_j=\emptyset)\wedge (T\cup X_j\in\A)\wedge(x\notin T)\}$.   By induction, there is a set $S$ such that $\A'\cup\{S\}$ is also an antimatroid. Then $\A\cup\{X_j\cup S\}$ is itself an antimatroid, as desired. The accessibility of $\A\cup\{X_j\cup S\}$ follows from the accessibility of $\A'$. The closure under unions of $\A\cup\{X_j\cup S\}$ follows from the closure under unions of $\A'$, together with the fact that (because of how $j$ was chosen) every superset of $X_j\cup S\cup\{x\}$ belongs to $\A$.

If, on the other hand, $X_0\setminus\{x\}$ does not belong to $\A$, let $T$ be the union of the supersets of $X_j$ that belong to $\A$ and that do not contain $x$.  $T$ belongs to $\A$, and there exists an element $y$ of $\A$ with $y\notin T$; we choose $S=T\cup\{y\}$ as the set to add to $\A$. This addition preserves accessibility (the element $y$ may be removed from $S$ to produce another set in $\A$) and closure under unions (any union of $S$ with a set containing $x$ lies in the interval between $X_{i-1}$ and $X_0$ and therefore belongs to $\A$; any union of $S$ with a  set in $\A$ that does not contain $x$ results in $S$ itself as the result, because all sets in $\A$ that do not contain $x$ are subsets of $T$).
\end{proof}

Given a set of $n$ labeled elements, we may define a tree $T$ in which each node represents an antimatroid on these elements: $T$ is rooted at the power set of the elements, and each non-root node $\A$ has as its parent an antimatroid $A\cup\{S\}$ given by Lemma~\ref{lem:rev-search-parent}. We wrote a computer program to traverse this tree using \emph{reverse search}~\cite{AviFuk-DAM-96} (a variant of depth first search optimized for implicitly defined trees such as this one) and used it to list all antimatroids on five or fewer labeled elements. With additional heuristics for eliminating duplicate copies of isomorphic antimatroids we were able to extend the search to all antimatroids on six unlabeled elements. Our program determined that there are no counterexamples to the $\frac13$~--~$\frac23$ conjecture with six or fewer elements. Figure~\ref{fig:extremes} shows three antimatroids found by our program that have $\delta(\A)=\frac13$ but that do not represent partial orders; two of them have convex dimension three, showing that the conjecture that width-three partial orders have balance strictly better than~$\frac13$ does not generalize to antimatroids.

\subsection{Convex dimension two}

By using double ladders, we may generalize to antimatroids the theorem of Linial~\cite{Lin-SJoC-84} that partial orders of width two are balanced.

\begin{theorem}
\label{thm:convex-dim-2}
Let $\A$ be an antimatroid with convex dimension two. Then $\A$ contains a double ladder and hence is balanced.
\end{theorem}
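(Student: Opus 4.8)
The plan is to extract the double ladder from a join decomposition $\A=\mathcal{C}_1\vee\mathcal{C}_2$ into two chain antimatroids, which exists because $\A$ has convex dimension two. Write $w_1,w_2$ for the unique basic words of $\mathcal{C}_1,\mathcal{C}_2$ (with $j$th letters $w_i[j]$), so that the feasible sets of $\mathcal{C}_i$ are exactly the prefixes of $w_i$ and every feasible set of $\A$ is the union of a prefix of $w_1$ with a prefix of $w_2$. First I would reduce to the case that $\A$ has two distinct \emph{initial} elements: if $\A$ has only one initial element $x_0$, then every basic word of $\A$ begins with $x_0$, and contracting $x_0$ produces an antimatroid on one fewer element which is again the join of two chain antimatroids (delete $x_0$ from the front of each $w_i$), hence of convex dimension at most two, and in fact exactly two since otherwise $\A$ itself would be a chain antimatroid; by induction on the number of elements this contraction contains a double ladder, which involves only elements other than $x_0$ and swaps of such elements and is therefore also a double ladder of $\A$. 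So assume $\{x\}$ and $\{y\}$ are both paths of $\A$ with $x\neq y$. Since $\{x\}$ is a feasible set of $\A$ it is a union of a prefix of $w_1$ with a prefix of $w_2$, so $\{x\}$ lies in $\mathcal{C}_1$ or in $\mathcal{C}_2$; likewise $\{y\}$; and the two singletons cannot lie in the same $\mathcal{C}_i$ because a chain antimatroid has a unique minimal nonempty feasible set. Hence, interchanging the two chains if necessary, $w_1$ begins with $x$ and $w_2$ begins with $y$.

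Now let $X$ be the sequence obtained from $w_1$ by deleting the single letter $y$ if it occurs in $w_1$, and let $Y$ be obtained from $w_2$ by deleting $x$ if it occurs there; then $X$ still begins with $x$, $Y$ still begins with $y$, and $x\notin Y$, $y\notin X$. I claim that $(X,Y)$ is a double ladder; by the symmetry between $\mathcal{C}_1$ and $\mathcal{C}_2$ it suffices to prove that $(x,Y)$ is a ladder.

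The crux is the following observation: in every basic word $\sigma$ of $\A$, the set $S$ of elements occurring before $x$ is a prefix of $w_2$, and within $\sigma$ these elements occur in the order in which they appear in $w_2$. Indeed, any prefix of $\sigma$ that does not yet contain $x$ is a feasible set of $\A$, hence the union of a prefix of $w_1$ with a prefix of $w_2$; the prefix of $w_1$ must be empty, since $x$ is the first letter of $w_1$; so every such prefix of $\sigma$ is a prefix of $w_2$, and a sequence each of whose prefixes is a prefix of $w_2$ is an initial segment of $w_2$. Condition~1 in the definition of a ladder for $(x,Y)$ is now immediate. For condition~2, suppose at least one element of $Y$ precedes $x$ in $\sigma$; then $S=\{w_2[1],\dots,w_2[k]\}$ for some $k\ge 1$, these letters occur in $\sigma$ in exactly this order, and so $w_2[k]$ is the element of $Y$ of largest index that precedes $x$ and moreover occurs immediately before $x$ in $\sigma$. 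Exchanging $w_2[k]$ and $x$ yields a word whose only new prefix is $\{w_2[1],\dots,w_2[k-1]\}\cup\{x\}$, the union of a prefix of $w_2$ with the one-letter prefix $\{x\}$ of $w_1$, which is feasible; so the exchanged word is again a basic word of $\A$. Thus $(x,Y)$ is a ladder, $(X,Y)$ is a double ladder, and Theorem~\ref{thm:double-ladder-is-balanced} shows that $\A$ is balanced.

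The step I expect to require the most care is this structural observation, together with the handling of letters shared by $w_1$ and $w_2$: such a letter lies in $Y$ as well as in $X$, and it may indeed occur before $x$ in a basic word — but only as part of the prefix of $w_2$ that precedes $x$, since $x$ heads $w_1$ and therefore forces the $w_1$-part of every prefix not containing $x$ to be empty. Consequently everything preceding $x$ is a prefix of $w_2$, in $w_2$-order, which is exactly what the ladder conditions need. Once that observation and the alignment of the two words with the two initial elements are in hand, verifying the two ladder conditions is routine.
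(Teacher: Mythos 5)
Your proof is correct and follows essentially the same route as the paper: express $\A$ via two basic words of chain antimatroids, reduce by induction to the case where the two words begin with distinct initial elements, and show that everything preceding one initial element must be an in-order prefix of the other word, yielding a double ladder. The only (immaterial) difference is that you take each ladder sequence to be the whole word minus the other chain's head, whereas the paper truncates each word at the position where the other's head occurs; the trailing elements in your version never precede $x$ and so do not affect the ladder conditions.
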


\begin{proof}
An antimatroid $\A$ of convex dimension two may be defined by a pair of basic words $X$ and $Y$; the sets of the antimatroid all have the form of the union of a prefix of $X$ and a prefix of $Y$.
. We assume without loss of generality that $X$ and $Y$ begin with different elements $x_0$ and $y_0$, for if $x_0=y_0$, then all basic words begin with that element and we may obtain a smaller antimatroid by removing it from both words, after which the result follows by induction.

Let $X'=(x_0, x_1,\dots x_j)$ be the prefix of $X$ up to but not including the position at which $y_0$ occurs in $X$, and let $Y'=(y_0,y_1,\dots y_k)$ be the prefix of $Y$ up to but not including the position at which $x_0$ occurs in $Y$. Then $(x_0,Y')$ is a ladder: in any basic word, the elements of $Y$ occurring before $x_0$ must be in sequence order, and unless $x_0$ is first in a basic word, it can be swapped with its predecessor in $Y$ to form another basic word. Symmetrically, $(y_0,X')$ is a ladder, and $(X',Y')$ is a double ladder. The existence of a balanced pair follows from Theorem~\ref{thm:double-ladder-is-balanced}.
\end{proof}

\subsection{Height two}

We define the \emph{height} of an antimatroid to be the height of its path poset. An antimatroid has height at most two if and only if all of its elements are either initial or final. The basic words of a height-two antimatroid may be described by specifying, for each final element $x$, a monotonic Boolean function $f_x$ whose input is a vector of indicator variables for the initial elements that are true if an initial element belongs to a given prefix of a basic word and false otherwise, and whose output is a single Boolean value that is true when $x$ may be included next after the given prefix and false otherwise. 
The antimatroid defined from a partial order of height two itself has height two.
The height-two antimatroids that can be described by partial orderings are exactly the ones for which each of the functions  $f_x$ is the conjunction of some subset of its variables.

The following result generalizes the existence of balanced pairs in height-two partial orderings~\cite{TroGehFis-Order-92}, and we believe that its proof is significantly simpler. It follows from the fact that every height-two antimatroid with at least seven elements contains either a large number of initial elements or a large number of final elements, together with our computational results on antimatroids with at most six elements.

\begin{theorem}
\label{thm:height-2}
Let $\A$ be an antimatroid with height at most two. Then either $\A$ has one or two elements and defines a total order on its elements, or it contains a balanced pair.
\end{theorem}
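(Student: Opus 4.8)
The plan is to reduce the theorem to the two results already in hand: Theorem~\ref{thm:many-initial} for the case of seven or more elements, and the computational enumeration for the case of six or fewer elements. First I would dispose of the small cases: if $\A$ has at most six elements, then by the computer search described in the previous subsection, $\A$ is balanced, so it either contains a balanced pair or is a chain antimatroid; and a chain antimatroid on at most six elements either has a single element (hence defines the trivial total order), or has more than one element but only one basic word, which --- being a height-two chain antimatroid --- still defines a total order on its elements. Actually the cleaner statement is: a chain antimatroid always defines a total order on its elements, so the ``total order'' alternative of the theorem is met whenever $\A$ is a chain antimatroid, regardless of size. Thus it remains only to handle non-chain antimatroids with at least seven elements.

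For that case, I would invoke the characterization stated in the subsection: an antimatroid has height at most two if and only if every element is either initial or final (in the sense of the ``initial element''/``final element'' definitions for sets of orderings, which for antimatroids say that $\{x\}$ is a path, respectively that $x$ is the endpoint of every path containing it). Given a height-two antimatroid $\A$ with $n \ge 7$ elements, every element is initial or final, so by pigeonhole at least $\lceil n/2 \rceil \ge n/2$ of the elements are initial, or at least that many are final. In either case the set of orderings given by the basic words of $\A$ satisfies the hypothesis of Theorem~\ref{thm:many-initial} (at least half the elements initial, or at least half final, and at least seven elements total), so $\A$ is balanced. Since $\A$ is not a chain antimatroid, ``balanced'' means precisely that it contains a balanced pair, which is the desired conclusion.

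The one subtlety --- and the step I would be most careful about --- is the borderline between ``element,'' ``both initial and final,'' and the pigeonhole count. An element that is both initial and final is an independent element, and it is counted toward both halves; I need to make sure the pigeonhole argument is stated so that such elements do not cause an off-by-one problem, i.e. that from ``every element is initial or final'' one genuinely gets at least $n/2$ in one of the two classes. This is immediate since the two classes cover all $n$ elements, so one of them has size at least $n/2$; if an independent element exists we could alternatively invoke Theorem~\ref{thm:independent-element} directly, but this is not needed. I would also note explicitly that the hypothesis ``at least seven elements'' in Theorem~\ref{thm:many-initial} is exactly what forces us to handle $n \le 6$ separately via the enumeration, which is why the theorem is phrased with the two alternatives it has.

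In summary, the proof is a short case split: chain antimatroids (any size) give the total-order alternative; non-chain antimatroids with $n \le 6$ are handled by the computer search; non-chain antimatroids with $n \ge 7$ are handled by combining the height-two characterization (every element initial or final), a pigeonhole step, and Theorem~\ref{thm:many-initial}. There is no hard technical obstacle remaining once those ingredients are assembled; the only care needed is in the bookkeeping of the small cases and the pigeonhole count.
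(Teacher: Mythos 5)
Your proposal is correct and follows essentially the same route as the paper's own proof: dispose of antimatroids with at most six elements via the computational enumeration, and for seven or more elements use the fact that every element of a height-two antimatroid is initial or final, apply pigeonhole, and invoke Theorem~\ref{thm:many-initial}. The extra care you take with the chain-antimatroid/total-order alternative is fine but not a substantive difference.
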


\begin{proof}
Let $\A$ be an antimatroid with height at most two. If $\A$ has at most six elements, our computational searches show that it must be balanced. If $\A$ has at least seven elements, at least half of which are initial, it is balanced by Theorem~\ref{thm:many-initial}. And if it has at least seven elements but fewer than half of them are initial, then the remaining elements are final and comprise more than half of the elements, and again $\A$ is balanced by Theorem~\ref{thm:many-initial}.
\end{proof}

\section{Antimatroids in particular}

In this section we study several specific subclasses of matroids 
that we can prove always obey the $\frac13$--$\frac23$ conjecture. These matroids will be constructed from graphs, using one of the following two constructions.

The \emph{node search antimatroid} of a graph $G$ with a designated source node~$s$~\cite{Nak-DAM-03} has as its elements the vertices of $G$ other than $s$ itself. A subset~$S$ of vertices belongs to the antimatroid if $S\cup\{s\}$ forms the vertex set of a connected subgraph of~$G$. The basic words of a node search antimatroid give the possible orderings in which the vertices of~$G$ may be explored, starting from~$s$, subject to the rule that a vertex may be explored only after at least one of its neighbors has already been explored.

Another family of antimatroids may be defined from the \emph{chordal graphs}. An \emph{elimination ordering} of a graph $G$ is a total ordering of its vertices such that, for each vertex $v$, the neighbors of $v$ that occur later than $v$ in the ordering form a clique. Equivalently, for every three-vertex induced path in~$G$, one of the two path endpoints must be ordered earlier than the midpoint. A graph is chordal if and only if it has an elimination ordering, and the elimination orderings of a chordal graph form the basic words of an antimatroid~\cite{Chv-RTCO-09,Jam-CRCG-82,KorLovSch-Greedoids-91,Shi-DAM-84}.

\subsection{Elimination orderings of $k$-trees}

A \emph{$k$-tree} is a graph that can be formed from a $(k+1)$-vertex complete graph by adding zero or more vertices in some order such that, at the time each vertex is added, its neighbors form a $k$-vertex clique~\cite{Ros-DM-74}. The $k$-trees are chordal~\cite{Ros-DM-74}; their elimination orders are the reverses of the orders in which they can be built up by adding vertices.

\begin{theorem}
\label{thm:k-tree}
Let $\A$ be the antimatroid of elimination orderings of a $k$-tree for any $k\ge 1$. Then $\A$ contains a double ladder and hence is balanced.
\end{theorem}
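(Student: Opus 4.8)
The plan is to exhibit a double ladder inside the set of basic words (elimination orderings) of $\A$ and then to conclude with Theorem~\ref{thm:double-ladder-is-balanced}. The double ladder will run along a path between two leaves of the clique tree of the $k$-tree $G$.

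First I would handle the trivial and degenerate cases. If $G$ has at most $k+1$ vertices it is complete, all $n!$ orderings are basic words, and any two elements form an indistinguishable pair, so two one-element sequences already form a double ladder. Otherwise $G$ is chordal but not complete, so by Dirac's theorem it has two non-adjacent simplicial vertices $a$ and $b$, whose closed neighborhoods $N[a]$ and $N[b]$ are maximal cliques. I would also dispose of the situations that produce indistinguishable (``twin'') pairs of vertices — for instance a $k$-clique contained in several maximal cliques whose completing vertices are simplicial, or (what amounts to the same after choosing the clique tree) the case $N(a)=N(b)$ — since any such pair is itself a double ladder. After these reductions one may fix a clique tree $\mathcal T$ of $G$ in which $N[a]=D_0$ and $N[b]=D_r$ are leaves, with $r\ge 2$ and with the size-$k$ separators $S_i=D_{i-1}\cap D_i$ consecutively distinct, so that $D_i=S_i\cup S_{i+1}$ for $0<i<r$.

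Now define, for $1\le i\le r-1$, the unique vertex $x_i$ of $S_i\setminus S_{i+1}$, and put $X=(a,x_1,\dots,x_{r-1})$; symmetrically let $y_j$ be the vertex of $S_{r-j+1}\setminus S_{r-j}$ and $Y=(b,y_1,\dots,y_{r-1})$ (these sequences may share vertices, which is harmless). I claim $(X,Y)$ is a double ladder, i.e.\ both $(b,X)$ and $(a,Y)$ are ladders; I would verify $(b,X)$, the other being symmetric. For condition~(1) I would show that in every basic word, whenever two of $a=x_0,x_1,\dots,x_{r-1}$ precede $b$, they do so in the order of $X$. The mechanism is that for each $i$ the neighborhood of $x_i$ contains a vertex strictly on the $a$-side of the clique-tree edge $D_{i-1}D_i$ (namely $x_{i-1}$) and one strictly on the $b$-side (the unique vertex of $S_{i+1}\setminus S_i$), and these two are non-adjacent by the running-intersection property of $\mathcal T$; hence $x_i$ cannot be eliminated before one of those two sides has been partly consumed. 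Following the forced precedences outward from a supposed violation — towards $a$ on one side and towards $b$ on the other — and using that $b$ is still present, one obtains a directed cycle of strict precedences within a single ordering, a contradiction. For condition~(2), given a basic word whose rung is $i$, one must check that transposing $b$ with $x_i$ again produces a basic word; at that stage $a,x_1,\dots,x_{i-1}$ have all been eliminated, and in the remaining graph $x_i$ and $b$ sit at ``opposite leaves'', so that deleting $b$ before $x_i$ (rather than after) is legal, while the remainder of the ordering is unaffected because it concerns the same induced subgraph.

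I expect condition~(2) to be the main obstacle. Between the elimination of $x_i$ and that of $b$, a basic word may insert vertices drawn from the branches of $\mathcal T$ that hang off the path $D_0,\dots,D_r$, and one must check that the transposition of $b$ and $x_i$ commutes past every such vertex. I expect this to come down to the dichotomy that a branch vertex is either non-adjacent to $x_i$, or else lies in a clique all of whose other vertices are adjacent to $x_i$, so that its eliminability is insensitive to whether $x_i$ or $b$ was removed first; but turning this into an airtight argument — and, in particular, arranging the choice of clique tree and of the leaf-to-leaf path so that the separators along it really are consecutively distinct — is where the genuine work lies. Once $(X,Y)$ is confirmed to be a double ladder, Theorem~\ref{thm:double-ladder-is-balanced} supplies the balanced pair.
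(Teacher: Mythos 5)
Your construction is the same as the paper's: take the clique tree of the $k$-tree, pick two leaf cliques with simplicial vertices $a=x_0$ and $b=y_0$, and run the two ladder sequences along the leaf-to-leaf path, eliminating one vertex per clique-tree edge from each end (your $x_i\in S_i\setminus S_{i+1}$ is exactly the paper's ``vertex of $k_{i-1}$ not in $k_i$''). Your argument for condition~(1) is also essentially the paper's: each $x_i$ has the non-adjacent pair of neighbors $x_{i-1}$ and the unique vertex of $S_{i+1}\setminus S_i$, and chaining these three-vertex induced paths toward $y_0$ forces $x_{i-1}<x_i$ whenever $x_i<y_0$. So far this matches.

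However, you have explicitly left condition~(2) unproved, and that is a genuine gap, not a presentational one: without it $(y_0,X)$ is not a ladder and Theorem~\ref{thm:double-ladder-is-balanced} cannot be invoked. The paper closes it with two short observations you did not supply. First, if $z$ is adjacent to $x_i$ but lies outside the subgraph induced by the path cliques, then $z$ must be non-adjacent to some vertex of the clique $k_i$ (otherwise $k_i\cup\{z\}$ would be a $(k+2)$-clique, impossible in a $k$-tree); extending the resulting three-vertex induced path toward $y_0$ as in condition~(1) shows that either $z$ or $y_0$ precedes $x_i$. Hence when $x_i$ is the rung element (the last $X$-element before $y_0$), every such $z$ has already been eliminated, so everything strictly between $x_i$ and $y_0$ in the word is non-adjacent to $x_i$ and the transposition commutes past it. Second, the boundary case $i=\ell-1$ needs separate treatment: the $k$ common neighbors of $x_{\ell-1}$ and $y_0$ may legitimately sit between them, but they remain simplicial whichever of the two is eliminated first, so the swap is still valid. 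This is precisely the dichotomy you guessed at (``non-adjacent to $x_i$, or in a clique insensitive to the order''), but the $(k+2)$-clique argument is what makes it airtight, and it is the heart of the proof. Your other worry --- arranging the clique tree so the separators along the path are consecutively distinct --- is comparatively minor and is effectively absorbed by your reduction to the twin/indistinguishable case (if two consecutive separators coincided, the corresponding branches would yield indistinguishable simplicial vertices). With the two observations above added, your proof would match the paper's.
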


\begin{figure}
\centering\includegraphics[scale=0.75]{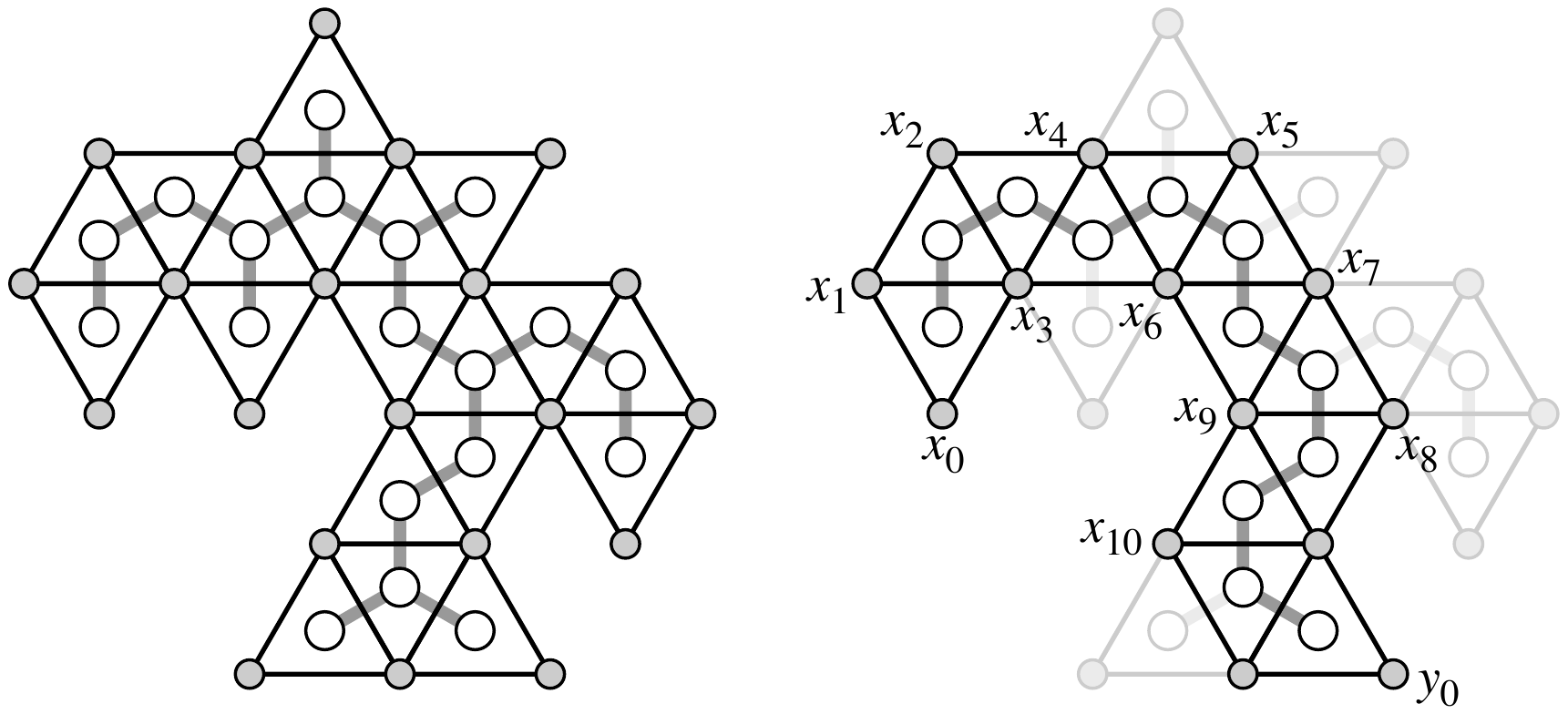}
\caption{Left: A 2-tree $G$ (small shaded vertices and thin black edges) and the tree $T$ of its maximal cliques (large white nodes and thick shaded edges). Right: the path between two leaves of $T$, and one of the two ladders defined for it in the proof of Theorem~\ref{thm:k-tree}.}
\label{fig:2-tree-ladder}
\end{figure}

\begin{proof}
Let $G$ be a $k$-tree with $n$ vertices. It is straightforward to prove by induction on $n$ that $G$ has $n-k$ maximal cliques, each of $k+1$ vertices, and that there exists a tree $T$ having these cliques as its nodes, with an edge between two nodes exactly when their intersection is a $k$-vertex clique (Figure~\ref{fig:2-tree-ladder}, left). The orderings in which $G$ may be built up by adding simplicial vertices to a $(k+1)$-vertex clique are in one-to-one correspondence with (the reverses of) the elimination orderings of $T$; however, these orderings are not quite in one-to-one correspondence with the elimination orderings of $G$ itself, because they do not specify the order of elimination of the final $k+1$ vertices. The simplicial vertices of $G$ are in one-to-one correspondence with the leaves of $T$ each simplicial vertex belongs to a single maximal clique in $G$, which must be a leaf vertex of $T$, and (unless $T$ contains a single node and $G$ is itself a clique) each leaf vertex of $T$ corresponds to a maximal clique of $G$ that contains exactly one simplicial vertex. In particular, $G$ has at least two simplicial vertices, which we call $x_0$ and $y_0$.

Let $\pi$ be the path in $T$ between the leaves corresponding to $x_0$ and $y_0$, let $\ell$ be the number of edges in $\pi$, and let $G'$ be the subgraph of $G$ induced by the vertices that belong to the cliques in $\pi$. Let $x_i$ (for $i=0$, $1$, $2$, \dots, $\ell-1$) be the order in which the vertices of $G'$ may be eliminated, starting with $x_0$, so that only the maximal clique containing $y_0$ remains, as shown in Figure~\ref{fig:2-tree-ladder}, right; define $y_i$ symmetrically. Then, we claim, these two sequences $X=(x_0,x_1,\dots)$ and $Y=(y_0,y_1,\dots)$ form a double ladder. For notational convenience, we let $k_0$ be the maximal clique containing $x_0$ in $\pi$, and for $i>0$ let $k_i$ be the maximal clique adjacent to $k_{i-1}$ in $\pi$. Then $k_i$ contains $x_i$ but does not contain $x_j$ for any $j<i$. For any two adjacent cliques $k_{i-1}$ and $k_i$, the clique $k_{i-1}$ differs from $k_i$ only in the inclusion of $x_{i-1}$ and the exclusion of a vertex different from $x_i$.

For each $0<i\le \ell-1$, the edge $x_{i-1}x_i$ belongs to an induced path with $x_{i-1}$ as one endpoint and $y_0$ as the other endpoint. In particular, if we let $f$ be the function that maps $x_i$ to the vertex that belongs to $k_i$ but not to $k_{i-1}$, then $x_{i-1}$--$x_i$--$f(x_i)$--$f(f(x_i))$--\dots--$y_0$ is an induced path. From this it follows that, in any basic word of the elimination antimatroid for $\A$, either $x_{i-1}$ or $y_0$ must occur prior to $x_i$, the first defining property for $(y_0,X)$ to be a ladder.

If $z$ is adjacent to a vertex $x_i$ but does not belong to $G'$, then $z$ must be nonadjacent to $x_\ell$ for some vertex $x_\ell\in k_i$ (otherwise, $k_i\cup\{z\}$ would form a $(k+2)$-vertex clique), and by the same argument as above, in any basic word of $\A$, either $z$ or $y_0$ must occur prior to $x_i$.
Therefore, if $x_i$ is the element of $X$ that occurs closest to and prior to $y_0$ in some basic word, any such $z$ must occur prior to $x_i$. Therefore, as long as $i<\ell-1$, swapping $x_i$ for $y_0$ leads to another basic word, because all the elements between $x_i$ and $y_0$ in the swap must be nonadjacent to $x_i$. In the special case that $i=\ell-1$, any of the $k$ common neighbors of $x_i$ and $y_0$ may appear between $x_i$ and $y_0$ in the basic word, but these vertices remain simplicial no matter which of $x_i$ or $y_0$ is eliminated first, so again the swap preserves the validity of the basic word. The ability to perform this type of swap is the second defining property for $(y_0,X)$ to be a ladder.

A symmetric argument shows that $(x_0,Y)$ is also a ladder, so $(X,Y)$ is a double ladder and $\A$ is balanced by Theorem~\ref{thm:double-ladder-is-balanced}.
\end{proof}

\subsection{Elimination orderings of block graphs}

\begin{figure}[t]
\centering\includegraphics[scale=0.6]{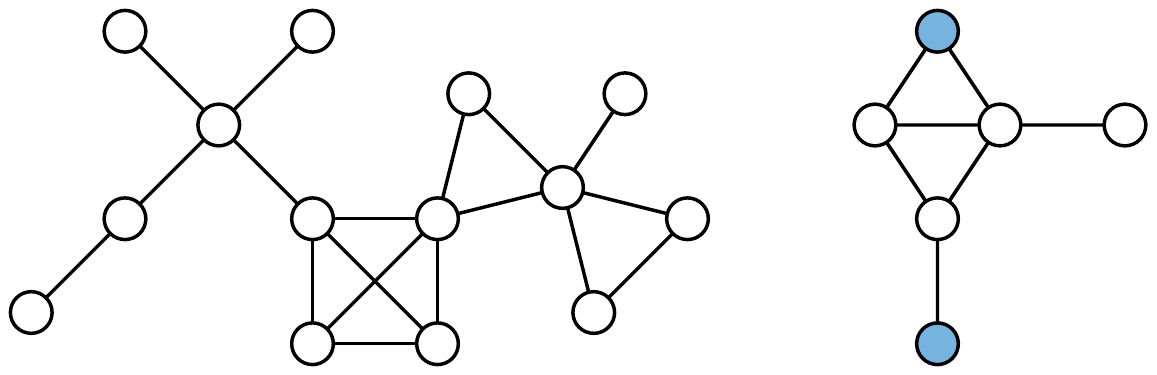}
\caption{Left: a block graph. Right: a ptolemaic graph whose antimatroid of elimination orderings has no double ladder, with a balanced pair of vertices marked.}
\label{fig:block-ptolemaic}
\end{figure}

The \emph{block graphs} (Figure~\ref{fig:block-ptolemaic}, left) are a subclass of the chordal graphs consisting of the graphs in which each block (biconnected component) is a clique~\cite{Har-CMB-63}. Equivalently, the block graphs are the chordal graphs in which each two maximal cliques intersect in at most a single vertex~\cite{How-JCTB-79}, making them in some sense as far as possible from the $k$-trees, in which many pairs of maximal $(k+1)$-cliques intersect in $k$-cliques.

\begin{theorem}
\label{thm:block-elim}
Let $\A$ be the antimatroid of elimination orderings of a block graph with more than one vertex. Then $\A$ contains a double ladder and hence is balanced.
\end{theorem}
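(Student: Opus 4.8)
The plan is to imitate the proof of Theorem~\ref{thm:k-tree}: I will exhibit an explicit double ladder in $\A$ and then conclude with Theorem~\ref{thm:double-ladder-is-balanced}. We may assume $G$ is connected, since otherwise the argument below applies to a connected component with more than one vertex (and a double ladder for that component's antimatroid is one for $\A$), while if every component is a single vertex then $G$ is edgeless, $\A$ is the set of all orderings, and any two elements form an indistinguishable pair. If $G$ is a single block it is a complete graph, every ordering of its vertices is an elimination ordering, and again any two vertices form an indistinguishable pair and hence a double ladder. So assume $G$ has a cut vertex. Its block-cut tree then has at least two leaves, which are end blocks, and each end block contains a simplicial vertex; I choose simplicial vertices $x_0$ and $y_0$ in two distinct end blocks $B_x$ and $B_y$, and let $B_x=B^{(0)},\,c_1,\,B^{(1)},\,c_2,\,\dots,\,c_m,\,B^{(m)}=B_y$ (with $m\ge 1$ and $B^{(j-1)}\cap B^{(j)}=\{c_j\}$) be the path between $B_x$ and $B_y$ in the block-cut tree, so that $x_0\in B^{(0)}\setminus\{c_1\}$ and $y_0\in B^{(m)}\setminus\{c_m\}$. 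I will prove that $X=(x_0,c_1,c_2,\dots,c_m)$ and $Y=(y_0,c_m,c_{m-1},\dots,c_1)$ form a double ladder, that is, that $(y_0,X)$ and $(x_0,Y)$ are ladders.

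The sequence $x_0$--$c_1$--$\dots$--$c_m$--$y_0$ is an induced path of $G$: consecutive vertices lie in a common block $B^{(j)}$, while non-consecutive vertices share no block because the block-cut tree is a tree. Applying the three-vertex induced-path characterization of elimination orderings to each consecutive triple of this path shows that, in any basic word of $\A$, the sequence of positions of $x_0,c_1,\dots,c_m,y_0$ has no interior local minimum and is therefore unimodal. From this it follows that the elements of $X$ that precede $y_0$ in a basic word form an initial segment $x_0,c_1,\dots,c_i$ of $X$ and occur in that order, which is precisely the first defining property of the ladder $(y_0,X)$.

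For the second property, fix a basic word $\sigma$ in which some element of $X$ precedes $y_0$, let $c_i$ (with $c_0:=x_0$) be the last such element, and let $\sigma'$ be obtained from $\sigma$ by exchanging the positions of $c_i$ and $y_0$; I must verify that $\sigma'$ is again an elimination ordering. Since $y_0$ is simplicial, $N(y_0)$ is a clique, so $y_0$'s later neighbours in $\sigma'$ form a clique; the later-neighbour sets of all vertices lying before $c_i$ or after $y_0$ are unchanged, and $c_i$'s set can only shrink. The only danger is a vertex $w$ lying strictly between $c_i$ and $y_0$ in $\sigma$ with $w\sim c_i$, whose later-neighbour set gains $c_i$ on passing to $\sigma'$; checking that this stays a clique is the crux of the argument, and I expect it to be the main obstacle. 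The key observation is that $c_{i+1}$ (or $y_0$, when $i=m$) lies in $B^{(i)}$ and follows $c_i$ in $\sigma$, hence is a later neighbour of $c_i$; since the later neighbours of $c_i$ form a clique they all lie in one block, and the only block containing both $c_i$ and that vertex is $B^{(i)}$. Therefore every neighbour of $c_i$ outside $B^{(i)}$ precedes $c_i$ in $\sigma$, which forces $w\in B^{(i)}$; running the same reasoning at $w$ — a later neighbour of $w$ lying outside $B^{(i)}$ would, together with $c_{i+1}$ (or $y_0$), be two non-adjacent later neighbours of $w$, forcing $c_{i+1}$ before $w$ and hence $c_{i+1}$ before $y_0$, contradicting the choice of $i$ — shows that all later neighbours of $w$ lie in $B^{(i)}$, so adjoining $c_i\in B^{(i)}$ keeps them a clique. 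Hence $\sigma'$ is a basic word and $(y_0,X)$ is a ladder.

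Reversing the block-cut path and repeating the argument verbatim shows that $(x_0,Y)$ is also a ladder, so $(X,Y)$ is a double ladder and $\A$ is balanced by Theorem~\ref{thm:double-ladder-is-balanced}. The two structural features of block graphs used here — every clique is contained in a single block, and the block-cut tree really is a tree — are exactly what makes the ``forward block'' $B^{(i)}$ well-defined and the swap in the previous paragraph valid, and this is the part of the proof most in need of care.
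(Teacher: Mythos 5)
Your proof is correct and follows essentially the same route as the paper's: choose simplicial vertices $x_0,y_0$ in two leaf blocks, take the (shortest) path between them through the cut vertices, show its two halves form a double ladder, and invoke Theorem~\ref{thm:double-ladder-is-balanced}. Your verification of the swap property is in fact somewhat more detailed than the paper's (you check the clique condition directly via the observation that all later neighbors of $c_i$, and then of any intervening $w$, must lie in the single block $B^{(i)}$, where the paper argues via the three-vertex induced-path characterization), but the construction and conclusion are the same.
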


\begin{proof}
By assumption, the given antimatroid $\A$ is defined from the set of elimination orderings of a block graph.
If the given block graph $G$ consists only of isolated vertices, then any two of these vertices form an indistinguishable pair and the result is immediate. Similarly, if any component of $G$ forms a nontrivial clique, any two of its vertices form an indistinguishable pair. Otherwise, let $X$ and $Y$ be two distinct blocks of $G$ that form leaves of the same tree of the block-cut forest of $G$ (that is, only one vertex of $X$ and only one vertex of $Y$ is a cut vertex), and let $x_0\in X$ and $y_0\in Y$ be any two vertices in these blocks that are not cut vertices. Let $X=(x_0,x_1,\dots)$ be the sequence of nodes on the shortest path from $x_0$ to $y_0$, up to but not including $y_0$, and let $Y=(y_0,y_1,\dots)$ be the sequence of nodes in the opposite order on the same path, up to but not including $x_0$.

Then the valid elimination orderings for $G$, restricted to the path from $x_0$ to $y_0$, must interleave nodes from the two ends of the path, from which it follows that the pair $(X,Y)$ obeys the first defining property of a double ladder, that the elements occur in the sequence order.

Suppose that $w$ is a basic word in which $x_0$ occurs later than $y_0$, and $y_i$ is the nearest path vertex to $x_0$ that occurs earlier than $x_0$ in $w$. Let $N$ be the vertices that are adjacent to $y_i$ but not adjacent to $y_{i+1}$ (not including $y_{i+1}$ itself).
Then in order for $y_i$ to have a clique as its set of later neighbors in the ordering, all vertices in $N$ must occur earlier than $y_i$ in the ordering. If some vertex $z\in N$ is the middle vertex of a three-vertex induced path for which $y_i$ is an endpoint, then either $z=y_{i+1}$ (and is eliminated after $x_0$ by assumption) or $y_i$ can be replaced by $y_{i+1}$ to produce a different three-vertex induced path. Thus, in either case, the elimination of $y_i$ has no effect on whether $z$ can be eliminated. It follows that swapping $x_0$ and $y_i$ produces another valid basic word, the second defining property of a double ladder.

We have identified a double ladder $(X,Y)$ in the given antimatroid $\A$, from which the existence of a balanced pair follows from Theorem~\ref{thm:double-ladder-is-balanced}.
\end{proof}

Every block graph is \emph{distance-hereditary}:  all induced paths with the same endpoints have the same length. However, Theorem~\ref{thm:block-elim} does not generalize to arbitrary chordal distance-hereditary graphs (also called \emph{ptolemaic graphs}): Figure~\ref{fig:block-ptolemaic} (right) shows a \emph{ptolemaic} or chordal distance-hereditary graph whose antimatroid of elimination orderings does not contain any double ladders, although its most balanced pair of elements (marked in the figure) has balance $50/102$.

\subsection{Node search orderings of distance-hereditary graphs}

Although the elimination antimatroids of distance-hereditary graphs may not have double ladders, their node search antimatroids do have them:

\begin{theorem}
\label{thm:distance-hereditary}
Let $\A$ be the node search antimatroid of a distance-hereditary graph with a given source vertex, and suppose that $\A$ has more than one basic word. Then $\A$ contains a double ladder and hence is balanced.
\end{theorem}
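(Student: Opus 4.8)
The plan is to mimic the strategy that already worked for block graphs and $k$-trees: locate two long, disjoint ``arms'' reaching toward the source, show that within each basic word the vertices on these arms must be explored in order toward the source, and verify the swap property. So first I would recall the structure of distance-hereditary graphs. A distance-hereditary graph with source $s$ has an \emph{isometric BFS layering} from $s$, and crucially the distance-hereditary property means that any induced path between two vertices has length equal to their distance. I would pick a vertex $v$ at maximum BFS distance from $s$, and among the vertices realizing that distance I would look for two whose shortest paths back to $s$ diverge as early as possible. If no two such paths diverge — i.e.\ every maximum-distance vertex shares essentially the same approach to $s$ — then I would argue that the antimatroid has few choices and either reduces (by peeling off a forced initial or final element) or is handled by the small-antimatroid computation; the hypothesis that $\A$ has more than one basic word guarantees some genuine branching to exploit.

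Second, having fixed two vertices $x_0, y_0$ whose shortest paths to $s$ branch at some vertex (or at $s$ itself), let $X = (x_0, x_1, \dots)$ be the sequence of vertices along $x_0$'s shortest path toward $s$ up to but not including the branch vertex, and $Y = (y_0, y_1, \dots)$ the analogous sequence for $y_0$. The first ladder condition — that in every basic word the elements of $Y$ appearing before $x_0$ occur in the order given by $Y$ — should follow because $y_{i+1}$ is the unique (by the isometric/distance-hereditary structure, or at least a necessary) stepping stone between $y_i$ and $s$: any connected subgraph containing $s$ and $y_i$ must contain a path from $s$ to $y_i$, and by distance-heredity that path must pass through the earlier vertices of $Y$ in order. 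I would want to be slightly careful here: distance-hereditary graphs can have multiple shortest paths, so I may need to choose the arms to lie in a subgraph (for instance the subgraph induced by the union of BFS layers restricted to the "descendants" of the branch vertex) where the stepping-stone is genuinely forced, analogously to how the $k$-tree proof restricted attention to the subgraph $G'$ induced by cliques on the tree path.

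Third, for the swap condition: suppose $w$ is a basic word in which some elements of $Y$ precede $x_0$, with $y_i$ the largest-indexed such element. I must show that swapping $x_0$ and $y_i$ yields another basic word, i.e.\ that after exploring everything up to $y_i$'s position, $x_0$ is explorable (some neighbor of $x_0$ already explored), that the vertices strictly between $y_i$ and $x_0$ in $w$ remain explorable with $y_i$ deferred, and that $y_i$ is still explorable once those are done. The first point holds because $x_1$ (the neighbor of $x_0$ on its arm) must already be present — by the same forced-path argument applied to $x_0$'s arm, since $x_0$ appears after $y_i$ which appears after all of $y_0, \dots, y_{i-1}$, and the branch structure forces $x_1$ to have been explored before $x_0$ can be. The middle point is the delicate one, exactly as in the block-graph proof: I need that no vertex explored between $y_i$ and $x_0$ relies on $y_i$ as its only already-explored neighbor, which I expect to follow from the distance-hereditary structure — any neighbor $z$ of $y_i$ that could be "unlocked" by $y_i$ is at BFS distance one more than $y_i$ and, because $y_i$ lies on a shortest path to a maximum-distance vertex, such $z$ must itself connect back via $y_{i+1}$ or another already-explored vertex, so $y_i$ is dispensable for it.

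The main obstacle I anticipate is precisely this middle point combined with the non-uniqueness of shortest paths in distance-hereditary graphs: unlike the chordal cases, I cannot lean on a clique tree, so I expect to spend the real work setting up the right "arm" subgraph (likely using the known characterization of distance-hereditary graphs by their pruning sequences of pendant vertices and true/false twins, or the BFS-layering "$\text{LexBFS}$" structure) so that the stepping-stone vertices are genuinely forced and the intermediate vertices provably do not depend on them. Once the double ladder $(X,Y)$ is established, balance is immediate from Theorem~\ref{thm:double-ladder-is-balanced}.
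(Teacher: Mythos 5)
There is a genuine gap, and it sits exactly where you flagged your ``main obstacle.'' Two problems. First, your arms are oriented backwards for a node search antimatroid: exploration proceeds outward from $s$, so the vertices of an arm are forced to appear in a basic word in order of increasing distance from $s$. Your sequence $X=(x_0,x_1,\dots)$ starts at the \emph{farthest} vertex and walks toward the branch point, so condition 1 of the ladder definition (elements of $X$ preceding $y_0$ appear in the order of the sequence) holds only for the reversed sequence; worse, the double-ladder machinery pivots on the pair $x_0,y_0$, which must be the \emph{first-available} elements of the two arms, not the deepest ones. Second, and more fundamentally, nothing in your construction controls what hangs off the arms. The paper's proof turns on a selection principle you never identify: define the \emph{power} of a vertex as its number of neighbors farther from $s$, and take $d$ to be the \emph{largest} distance at which a vertex of power at least two exists. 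Everything strictly beyond layer $d$ then has power at most one, so the structures hanging below layer $d$ are literally disjoint pendant paths; this is what makes the order-forcing automatic (each path vertex has a unique way to be reached) and makes the swap condition checkable (deferring a path vertex cannot block anything outside its own path). Your arms, by contrast, are shortest paths from maximum-distance vertices back toward $s$; these can pass through vertices of high power whose removal from the prefix blocks other vertices, and the intermediate order is not forced because distance-hereditary graphs admit many shortest paths between the same endpoints (consider $K_{2,n}$). You acknowledge both difficulties but resolve neither, and they are the entire content of the proof.

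You are also missing the case analysis that the paper needs even after locating the right layer. Using the distance-hereditary property one shows that for vertices $w,w'$ in layer $d+1$ the neighbor sets $N(w),N(w')$ in layer $d$ are disjoint or nested, and the argument then splits into cases: some produce a pair of \emph{indistinguishable} elements (a trivial double ladder) rather than two long arms, and the final case produces two pendant paths rooted at distinct layer-$d$ vertices $v,v'$ where the swap works precisely because every vertex reachable only through one path's root is also reachable through the other's. Your fallback for the degenerate situation (``the antimatroid has few choices and either reduces \dots or is handled by the small-antimatroid computation'') is not an argument: the hypothesis of more than one basic word guarantees branching somewhere, but not at the maximum-distance layer where you look for it, and there is no bound on the size of the antimatroids arising in that case. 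The high-level plan (find a double ladder, invoke Theorem~\ref{thm:double-ladder-is-balanced}) matches the paper, but the construction of the ladder itself --- the whole difficulty --- is not carried out.
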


\begin{proof}
Let $G$ be a distance-hereditary graph with designated source vertex $s$, and let $\A$ be its node search antimatroid. Define the power of a vertex $v$ in $G$ to be the number of neighbors of $v$ that are farther from $s$ than $v$ is. If $G$ is a path and $s$ is an endpoint of the path then $\A$ has only one basic word; otherwise, some of the vertices have power at least two while others (e.g. the ones farthest from $s$) do not. Let $d$ be the largest distance from $s$ at which there exist vertices with power at least two, let $B$ be the bipartite graph of the edges in $G$ with one endpoint at distance $d$ and the other endpoint at distance $d+1$, and let $H$ be a connected component of $B$ that contains at least one vertex of power at least two. Let $P$ be the set of vertices in $H$ at distance $d$ from $s$ (not all of which necessarily have power two or more), and let $Q$ be the set of vertices in $H$ at distance $d+1$ from $s$ (necessarily all with power zero or one).

We observe that, if $u$ is at distance $d-1$ from $s$ and is adjacent to at least one vertex in $P$, then $u$ must be adjacent to all vertices in $P$. For, if there were vertices $v$ and $v'$ in $P$ such that $u$ were adjacent to $v$ but not $v'$, then there would be an induced path in $G$ from $v'$ to $v$ via $H$ (or possibly directly by an edge $vv'$) and then from $v$ to $s$ via $u$, of length greater than $d$, violating the distance-hereditary property of~$G$.

For each vertex $w\in Q$, let $N(w)$ denote the set of neighbors of $w$ that belong to $P$.
Then, for any two vertices $w$ and $w'$ in $Q$, the two sets $N(w)$ and $N(w')$ must either be disjoint or nested. If $w$ and $w'$ are adjacent then more strongly $N(w)=N(w')$, by an argument similar to the one above. Otherwise, if $N(w)$ and $N(w')$ could be neither disjoint nor nested then we could find a vertex $v\in N(w)\setminus N(w')$, a second vertex $v'\in N(w')\setminus N(w)$, and a third vertex $v''\in N(w)\cap N(w')$. But then, the induced path $w$--$v''$--$w'$ would have a different length from the induced path $w$--$v$--$u$--$v'$--$w'$ (if $v$ and $v'$ are nonadjacent and $u$ is a common neighbor at distance $d-1$ from $s$) or $w$--$v$--$v'$--$w'$ (if $v$ and $v'$ are adjacent), in either case violating the distance-hereditary property.

\begin{figure}[t]
\centering\includegraphics[scale=0.45]{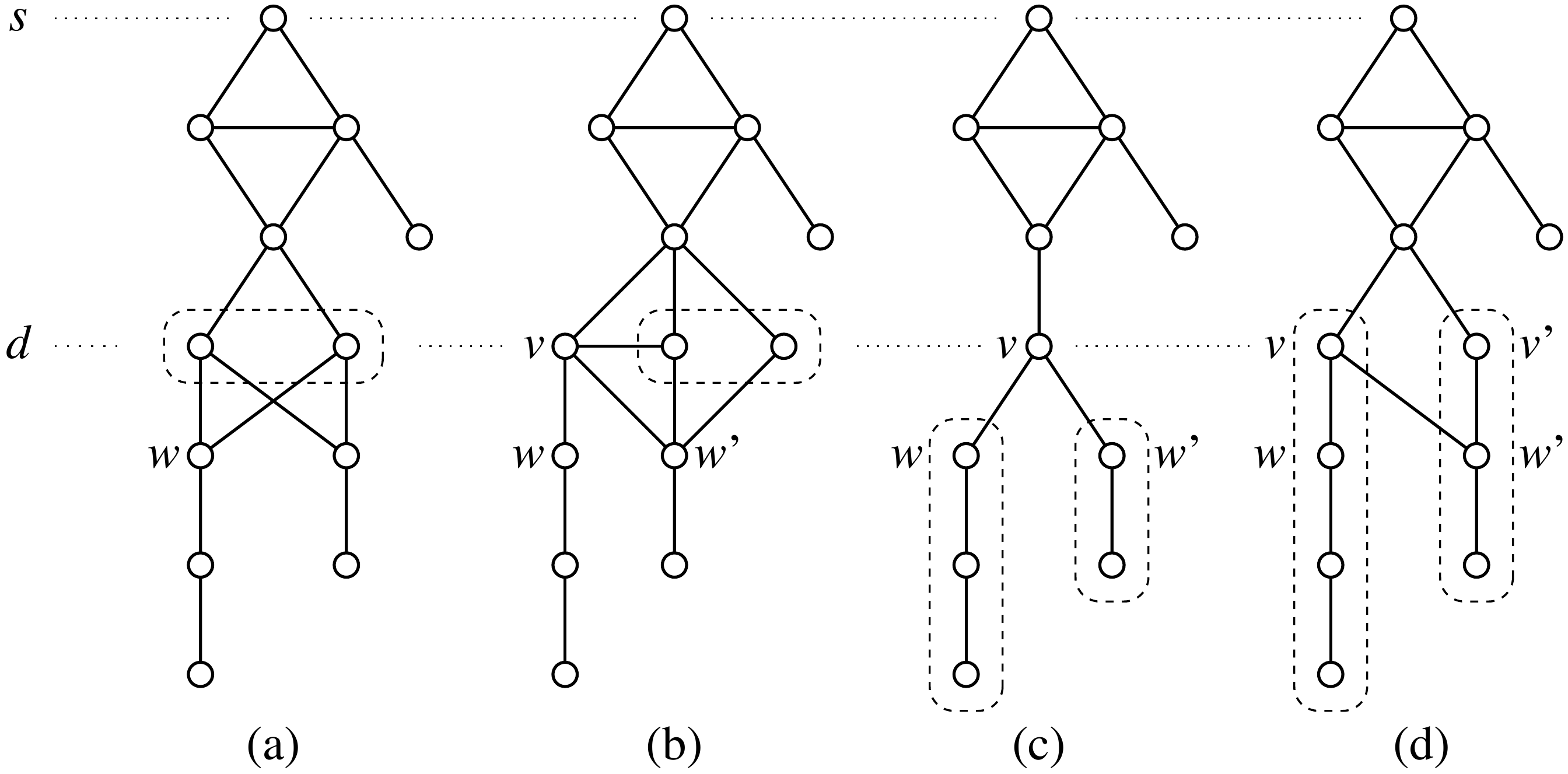}
\caption{Cases for the proof of Theorem~\ref{thm:distance-hereditary}. (a) If $|N(w)|\ge 2$, the vertices in $N(w)$ are indistinguishable. (b) If $|N(w')|\ge 3$, the vertices in $N(w')\setminus N(w)$ form indistinguishable elements of $\A$, even though they may not be isomorphic as graph vertices. (c) If $N(w)=N(w')=\{v\}$, the paths from $w$ and $w'$ form a double ladder. (d) If $N(w)=\{v\}$ and $N(w')=\{v,v'\}$, the paths from $v$ through $w$ and from $v'$ through $w'$ form a double ladder. In each case the indistinguishable vertices or double ladder paths are shown as surrounded by dashed curves.}
\label{fig:dh-cases}
\end{figure}

Choose $w$ in $Q$ such that $N(w)$ is minimal. If $|N(w)|>1$, then any two vertices in $N(w)$ are indistinguishable: for they have the same sets of neighbors with distances $d\pm 1$ to $s$, and the edges connecting pairs of vertices in $N(w)$ are irrelevant for the node searching antimatroid. If there are two indistinguishable elements, they form a trivial double ladder and we are done with the proof;  this case is depicted in Figure~\ref{fig:dh-cases}(a).
Otherwise, $|N(w)|=1$, and we let $v$ be the sole member of $N(w)$. Necessarily $v$ must have power two or more, for otherwise $v$ and $w$ would form by themselves a component of $B$ that does not contain any vertices of high power, contradicting the way we chose~$H$.

Now choose again $w'\in Q$ to be a neighbor of $v$ such that $N(w')$ is a minimal superset of $\{v\}$.  Because of our choice of $d$, $w$ and $w'$ as well as all nodes farther from $s$ have power zero or one; that is to say, the sets of nodes reachable from $s$ via induced paths through $w$ and through $w'$ (including $w$ and $w'$ themselves) form paths.
If $N(w')$ contains three or more vertices, then any two of them that are not $v$ are indistinguishable for the same reason as before, and we are done. Figure~\ref{fig:dh-cases}(b) illustrates this case. If $N(w)=N(w')$, as in Figure~\ref{fig:dh-cases}(c), then the two paths reachable via $w$ and $w'$ form a double ladder and we are done.

In the remaining case, shown in Figure~\ref{fig:dh-cases}(d), $N(w')=\{v,v'\}$ for some vertex $v'$.
Let $p$ be the path of vertices starting with $v$ and $w$ and continuing through the remaining vertices reachable via $w$. Let $p'$ be the path of vertices starting with $v'$ and $w'$ and continuing through the remaining vertices reachable via $w'$.
Then, in any basic word of $\A$, until both $v$ and $v'$ have been included, all elements from one of the two paths must occur in path order, the first defining property of a double ladder. Additionally, the only elements that are outside $p$ but that are reachable via a path through a vertex of $p$ are also reachable by a path through $v'$, and the only elements that are outside $p'$ but that are reachable via a path through a vertex of $p'$ are also reachable by a path through $v$. Therefore, swapping $v$ or $v'$ for the nearest earlier element of the other path always produces another basic word, the second defining property of a double ladder. Thus, $(p,p')$ forms a double ladder, completing the proof.
\end{proof}

\subsection{Split graphs}

A \emph{split graph}~\cite{FolHam-SE-77,TysChe-BSSR-79} is a graph whose vertices can be partitioned into a clique and an independent set. The vertices in the independent set form initial elements for the elimination ordering antimatroid; however, the vertices in the clique of a split graph do not necessarily form final elements, and we have not succeeded in proving that all elimination antimatroids of split graphs are balanced. The strongest result in this direction that we have found applies to distance-hereditary split graphs (equivalently ptolemaic split graphs or gem-free split graphs), a class of graphs studied by Branst\"adt et al.~\cite{BraDraLe-TCS-05} that includes the graph of Figure~\ref{fig:block-ptolemaic}(right).

\begin{theorem}
\label{thm:dh-split}
Let $\A$ be the antimatroid of elimination orderings of a distance-hereditary split graph $G$. Then  either $\A$ contains two indistinguishable elements, or at least half of the vertices of $G$ are simplicial. In either case, $\A$ is balanced.
\end{theorem}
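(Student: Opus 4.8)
The plan is to prove the stated dichotomy and then dispose of each of its two cases using a theorem already in hand. Two translations between $G$ and its elimination antimatroid $\A$ will do most of the work: a vertex of $G$ is simplicial if and only if it is an initial element of $\A$ (its singleton is a path of $\A$), and if $x$ and $y$ are twins of $G$ --- meaning $N[x]=N[y]$ or $N(x)=N(y)$ --- then transposing $x$ and $y$ is an automorphism of $G$, hence carries elimination orderings to elimination orderings, so that $x,y$ is an indistinguishable pair in $\A$. Thus it will be enough to show: if $G$ has no pair of twins, then at least half of its vertices are simplicial. Once that is done, in the presence of twins $\A$ contains two indistinguishable elements and is balanced by Theorem~\ref{thm:double-ladder-is-balanced} (an indistinguishable pair is a trivial double ladder), while in the twin-free case at least half of the $n$ elements of $\A$ are initial, so $\A$ is balanced by Theorem~\ref{thm:many-initial} when $n\ge 7$ and by the computational enumeration when $n\le 6$.

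For the counting I would fix a split partition $(C,I)$ in which $C$ is a maximal clique; such a partition exists because the clique side of any split partition can be enlarged to a maximal clique without destroying independence of the other side. Then $N(v)\subseteq C$ for each $v\in I$, and since $C$ is maximal each $v\in I$ has a non-neighbour in $C$; a one-line check then shows that the simplicial vertices of $G$ are exactly $I$ together with $C_0:=\{c\in C:N(c)\cap I=\emptyset\}$, so that the non-simplicial vertices are $C^{+}:=\{c\in C:N(c)\cap I\ne\emptyset\}$. Since $n=|I|+|C_0|+|C^{+}|$, the goal reduces to $|C^{+}|\le |I|$, and for that I want an injection from $C^{+}$ to $I$.

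The key structural fact I would isolate is that the family $\mathcal F=\{N(v):v\in I\}$ of subsets of $C$ is \emph{laminar} --- any two of its members are nested or disjoint. The point is that there is essentially only one way an induced gem (a path $a$--$b$--$c$--$d$ together with a vertex $e$ adjacent to all of $a,b,c,d$) can occur in a split graph with $C$ a maximal clique: a gem has independence number two and clique number three, so exactly three of its vertices lie in $C$, and these must be the unique triangle $\{b,c,e\}$ of the gem whose complementary pair $\{a,d\}$ is nonadjacent; in other words $a,d\in I$ and $b,c,e\in C$ with $b\in N(a)\setminus N(d)$, $c\in N(d)\setminus N(a)$, and $e\in N(a)\cap N(d)$, and conversely any five such vertices induce a gem. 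Thus the distance-hereditary property of a split graph is precisely the statement that no two neighbourhoods $N(a)$, $N(d)$ can ``cross'' in this way, which is laminarity. I expect this step --- pinning down the sole gem pattern and reading it off as a forbidden crossing --- to be the only genuinely delicate point; the rest is bookkeeping.

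To finish, I would use twin-freeness to make the members of $\mathcal F$ pairwise distinct (two coincident neighbourhoods of $I$-vertices would be false twins). For $c\in C^{+}$ let $M(c)$ denote the smallest member of $\mathcal F$ that contains $c$: this is well defined because $c$ lies in some $N(v)$, and by laminarity the members of $\mathcal F$ containing $c$ form a chain. If $M(c)=M(c')$ for two distinct $c,c'\in C^{+}$, then by laminarity $c$ and $c'$ lie in exactly the same members of $\mathcal F$, so $N(c)\cap I=N(c')\cap I$ and hence $N[c]=C\cup(N(c)\cap I)=N[c']$, making $c$ and $c'$ true twins --- contrary to hypothesis. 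So $c\mapsto M(c)$ is an injection from $C^{+}$ into the nonempty members of $\mathcal F$, of which there are at most $|I|$ (one per $I$-vertex, all distinct); composing with the map sending each such member back to the $I$-vertex that defines it yields an injection $C^{+}\to I$. Therefore $|C^{+}|\le|I|\le|I|+|C_0|$, at least half of the vertices of $G$ are simplicial, and the argument is complete.
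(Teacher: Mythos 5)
Your proof is correct and follows essentially the same route as the paper: both arguments hinge on the laminarity of the neighbourhood family $\{N(v):v\in I\}$ forced by the distance-hereditary (gem-free) property, and both convert twin-freeness into an injection from the non-simplicial clique vertices into $I$ (your map $c\mapsto M(c)$ plays the role of the paper's bipartite graph $H$ of minimal-neighbourhood incidences, which the paper shows must be a perfect matching). The remaining differences are cosmetic --- you normalize by taking the clique side of the split partition maximal where the paper takes it minimal, and you are slightly more careful in falling back on the computational enumeration for antimatroids with at most six elements before invoking Theorem~\ref{thm:many-initial}.
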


\begin{proof}
Let $G$ be a distance-hereditary split graph, and $K$ and $I$ be the clique and independent set of a split decomposition of $G$; if $G$ may be split into a clique and an independent set in multiple ways, let $K$ be minimal among all possible choices of a clique in such a decomposition. Because $G$ is distance-hereditary, the neighborhoods of any two vertices in $I$ must be either disjoint or nested, for if two vertices $v$ and $v'$ in $I$ had neighborhoods that were neither disjoint nor nested then there would exist neighbors $w$, $w'$, and $w''$, with $w$ adjacent to $v$ but not $v'$, $w'$ adjacent to $v'$ but not $v$, and $w''$ adjacent to both. (The subgraph formed by these five vertices is called a \emph{gem}.) But then the induced paths $vww'v'$ and $vw''v'$ would have different lengths, contradicting the assumption that $G$ is distance-hereditary.

Now let $H$ be the bipartite graph that connects each vertex in $K$ to the adjacent vertices in $I$ whose neighborhoods are minimal. Every vertex $v$ in $K$ must have a neighbor in $H$, for otherwise we could move $v$ from $K$ to $I$ contradicting the assumed minimality of $K$. If a vertex $v\in K$ has two neighbors in $H$, those two neighbors must be indistinguishable. And if a vertex $v\in I$ has two neighbors in $K$, then those two neighbors must again be indistinguishable. The only remaining possibility is that $H$ is a perfect matching on $K$ and that $I$ contains at least as many vertices as $K$.

If $G$ contains two indistinguishable vertices then its antimatroid $\A$ of elimination orderings is clearly balanced, and if at least half of the vertices of $G$ belong to $I$ (and are therefore simplicial) then $\A$ is balanced by Theorem~\ref{thm:many-initial}.
\end{proof}

For node search antimatroids of arbitrary split graphs we can apply Theorem~\ref{thm:height-2}.

\begin{theorem}
\label{thm:node-search-height-2}
Let $\A$ be the node search antimatroid of a split graph in which the source node belongs to the clique of the split partition. Then $\A$ has height at most two and therefore is balanced.
\end{theorem}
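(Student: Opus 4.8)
The plan is to prove that the node search antimatroid $\A$ has height at most two; Theorem~\ref{thm:height-2} then immediately yields that $\A$ is balanced. Since the height of $\A$ is by definition the height of its path poset, it suffices to show that every path of $\A$ consists of at most two elements. To that end I would first describe the paths concretely. A vertex set $S$ (not containing $s$) lies in $\A$ exactly when $G[S\cup\{s\}]$ is connected, and the elements of such an $S$ whose removal keeps the set in $\A$ are precisely the non-cut vertices of $G[S\cup\{s\}]$ other than $s$ itself. As every connected graph with at least two vertices has at least two non-cut vertices, $S$ is a path if and only if $s$ is a non-cut vertex of $G[S\cup\{s\}]$ and is the only non-cut vertex besides one element of $S$; equivalently, $G[S\cup\{s\}]$ is a simple (chordless) path having $s$ as an endpoint. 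So the paths of $\A$ are exactly the sets $\{v_1,\dots,v_m\}$ for which $s,v_1,\dots,v_m$ is an induced path of $G$.

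It then remains to bound the length of an induced path starting at $s$. Let $K,I$ be the clique and independent set of the split partition, with $s\in K$. Any two vertices of an induced path that both lie in the clique $K$ are adjacent, hence consecutive along the path; therefore an induced path contains at most two vertices of $K$, and if it contains two then they are consecutive. Our path $s,v_1,\dots,v_m$ begins at $s\in K$, so it has at most one further $K$-vertex and that vertex, if it exists, must be $v_1$. All of $v_2,\dots,v_m$ then lie in $I$, and since they are consecutive along the path and $I$ is independent there can be at most one of them. Hence $m\le 2$, every path of $\A$ has at most two elements, the path poset of $\A$ has height at most two, and so does $\A$. Applying Theorem~\ref{thm:height-2} completes the argument.

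There is essentially one idea here---induced paths leaving a clique vertex of a split graph have at most three vertices---and no real obstacle; the only point I would take care to state cleanly is the identification of the paths of a node search antimatroid with the vertex sets of induced paths from the source, which relies on the elementary fact that a connected graph has exactly two non-cut vertices precisely when it is a simple path. Alternatively, one can bypass the path poset and invoke the criterion that $\A$ has height at most two iff every element is initial or final: a vertex adjacent to $s$ gives a singleton path and is initial, while the length bound above shows that any other vertex is the endpoint of every path containing it, hence final.
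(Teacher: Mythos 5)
Your proof is correct, but its main line reaches the height bound by a different route than the paper. The paper's proof is a three-sentence verification using the criterion (stated earlier in the paper) that an antimatroid has height at most two if and only if every element is initial or final: each vertex of the clique $K$ is adjacent to the source $s$, hence forms a singleton path and is an initial element, while each vertex of the independent set $I$ is simplicial and is therefore a final element. You instead work directly from the definition of height as the height of the path poset: you identify the paths of the node search antimatroid with the vertex sets of induced paths of $G$ beginning at $s$ (using the elementary fact that a connected graph has exactly two non-cut vertices precisely when it is a simple path), and then observe that an induced path leaving a clique vertex of a split graph has at most three vertices, so every antimatroid path has at most two elements and no chain in the path poset has more than two members. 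Your version is longer but more self-contained, since it does not lean on the unproved ``height $\le 2$ iff all elements are initial or final'' equivalence; your closing alternative essentially recovers the paper's argument. Both proofs ultimately rest on the same structural facts: clique vertices are adjacent to $s$, and independent-set vertices can only be reached as the last step of a short induced path.
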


\begin{proof}
Let $\A$ be the node search antimatroid of a split graph in which the source node belongs to the clique of the split partition. Then every vertex in this clique is adjacent to the source node, and hence forms an initial element of $\A$. Every other vertex is simplicial, and hence forms a final element of $\A$. Therefore, $\A$ has height two and is balanced by Theorem~\ref{thm:height-2}.
\end{proof}

A height-two antimatroid is the node search antimatroid of a split graph exactly when each of the monotonic functions $f_x$ defining it is a disjunction of some subset of its arguments.

\section{Conclusions}

We have identified several patterns that, when present in a set of orderings, ensure that it is balanced. Using these patterns, we have shown many specific classes of antimatroid to be balanced, and found antimatroid-theoretic generalizations of the theorems that partially ordered sets of width two or of height two are balanced. Based on these results, the evidence for the $\frac13$~--~$\frac23$ conjecture for antimatroids seems strong.

Antimatroids are not the most general structure from which one can define a set of orderings, and it would be of interest to explore the possibility of extending the $\frac13$~--~$\frac23$ conjecture to more general structures.  A natural candidate for a type of set system to explore, intermediate between antimatroids and arbitrary accessible set families, would be the full greedoids; however, we are not prepared to make a conjecture on whether they are balanced.

\subsection*{Acknowledgements}

This work was supported in part by NSF grants
0830403 and 1217322 and by the Office of Naval Research under grant
N00014-08-1-1015.

\raggedright
\bibliographystyle{abuser}
\bibliography{ab}

\begin{thebibliography}{10}

\bibitem{AviFuk-DAM-96}
D.~Avis and K.~Fukuda.
\newblock {Reverse search for enumeration}.
\newblock {\em Discrete Applied Mathematics} 65(1-3):21{--}46, 1996,
  \href{http://dx.doi.org/10.1016/0166-218X(95)00026-N}%
{doi:10.1016/0166-218X(95)00026-N}.

\bibitem{BoyFai-DAM-90}
E.~A. Boyd and U.~Faigle.
\newblock {An algorithmic characterization of antimatroids}.
\newblock {\em Discrete Applied Mathematics} 28(3):197{--}205, 1990,
  \href{http://dx.doi.org/10.1016/0166-218X(90)90002-T}%
{doi:10.1016/0166-218X(90)90002-T}.

\bibitem{BraDraLe-TCS-05}
A.~Brandst{\"a}dt, F.~F. Dragan, H.-O. Le, and R.~Mosca.
\newblock {New graph classes of bounded clique-width}.
\newblock {\em Theory of Computing Systems} 38(5):623{--}645, 2005,
  \href{http://dx.doi.org/10.1007/s00224-004-1154-6}%
{doi:10.1007/s00224-004-1154-6}.

\bibitem{Bri-Order-89}
G.~R. Brightwell.
\newblock {Semiorders and the 1/3{--}2/3 conjecture}.
\newblock {\em Order} 5(4):369{--}380, 1989,
  \href{http://dx.doi.org/10.1007/BF00353656}%
{doi:10.1007/BF00353656}.

\bibitem{Bri-DM-99}
G.~R. Brightwell.
\newblock {Balanced pairs in partial orders}.
\newblock {\em Discrete Mathematics} 201(1-3):25{--}52, 1999,
  \href{http://dx.doi.org/10.1016/S0012-365X(98)00311-2}%
{doi:10.1016/S0012-365X(98)00311-2}.

\bibitem{BriFelTro-Order-95}
G.~R. Brightwell, S.~Felsner, and W.~T. Trotter.
\newblock {Balancing pairs and the cross product conjecture}.
\newblock {\em Order} 12(4):327{--}349, 1995,
  \href{http://dx.doi.org/10.1007/BF01110378}%
{doi:10.1007/BF01110378}.

\bibitem{Chv-RTCO-09}
V.~Chv{\'a}tal.
\newblock {Antimatroids, betweenness, convexity}.
\newblock {\em Research Trends in Combinatorial Optimization: Bonn 2008},
  pp.~57{--}64. Springer-Verlag, 2009,
  \href{http://dx.doi.org/10.1007/978-3-540-76796-1\_3}%
{doi:10.1007/978-3-540-76796-1\_3}.

\bibitem{DoiFal-LS-11}
J.-P. Doignon and J.-C. Falmagne.
\newblock {\em {Learning Spaces}}.
\newblock Springer-Verlag, 2011.

\bibitem{EdeJam-GD-85}
P.~H. Edelman and R.~E. Jamison.
\newblock {The theory of convex geometries}.
\newblock {\em Geometriae Dedicata} 19(3):247{--}270, 1985,
  \href{http://dx.doi.org/10.1007/BF00149365}%
{doi:10.1007/BF00149365}.

\bibitem{EdeSak-Order-88}
P.~H. Edelman and M.~E. Saks.
\newblock {Combinatorial representation and convex dimension of convex
  geometries}.
\newblock {\em Order} 5(1):23{--}32, 1988,
  \href{http://dx.doi.org/10.1007/BF00143895}%
{doi:10.1007/BF00143895}.

\bibitem{Epp-LS-13}
D.~Eppstein.
\newblock {Learning sequences: an efficient data structure for learning
  spaces}.
\newblock To appear in {\em Knowledge Spaces: Applications in Education}.
  Springer-Verlag, 2013.

\bibitem{FolHam-SE-77}
S.~F{\"o}ldes and P.~L. Hammer.
\newblock {Split graphs}.
\newblock {\em Proceedings of the Eighth Southeastern Conference on
  Combinatorics, Graph Theory and Computing (Louisiana State Univ., Baton
  Rouge, La., 1977)}, pp.~311{--}315. Utilitas Math., Congressus Numerantium
  XIX, 1977.

\bibitem{GlaYao-94}
P.~Glasserman and D.~D. Yao.
\newblock {\em {Monotone Structure in Discrete Event Systems}}.
\newblock Wiley Series in Probability and Statistics. Wiley Interscience, 1994.

\bibitem{Har-CMB-63}
F.~Harary.
\newblock {A characterization of block-graphs}.
\newblock {\em Canadian Mathematical Bulletin} 6(1):1{--}6, 1963.

\bibitem{HarMos-AMM-66}
F.~Harary and L.~Moser.
\newblock {The theory of round robin tournaments}.
\newblock {\em American Mathematical Monthly} 73(3):231{--}246, 1966,
  \href{http://dx.doi.org/10.2307/2315334}%
{doi:10.2307/2315334}.

\bibitem{How-JCTB-79}
E.~Howorka.
\newblock {On metric properties of certain clique graphs}.
\newblock {\em Journal of Combinatorial Theory, Series B} 27(1):67{--}74, 1979,
  \href{http://dx.doi.org/10.1016/0095-8956(79)90069-8}%
{doi:10.1016/0095-8956(79)90069-8}.

\bibitem{Jam-CRCG-82}
R.~E. Jamison.
\newblock {A perspective on abstract convexity: classifying alignments by
  varieties}.
\newblock {\em Convexity and Related Combinatorial Geometry}, pp.~113{--}150.
  Marcel Dekker, 1982.

\bibitem{KahSak-Order-84}
J.~Kahn and M.~E. Saks.
\newblock {Balancing poset extensions}.
\newblock {\em Order} 1(2):113{--}126, 1984,
  \href{http://dx.doi.org/10.1007/BF00565647}%
{doi:10.1007/BF00565647}.

\bibitem{Kis-MZ-68}
S.~S. Kislitsyn.
\newblock {Finite partially ordered sets and their associated sets of
  permutations}.
\newblock {\em Matematicheskiye Zametki} pp.~511{--}518, 1968.

\bibitem{KorLovSch-Greedoids-91}
B.~Korte, L.~Lov{\'a}sz, and R.~Schrader.
\newblock {\em {Greedoids}}.
\newblock Springer-Verlag, 1991, pp.~19{--}43.

\bibitem{Lin-SJoC-84}
N.~Linial.
\newblock {The information-theoretic bound is good for merging}.
\newblock {\em SIAM Journal on Computing} 13(4):795{--}801, 1984,
  \href{http://dx.doi.org/10.1137/0213049}%
{doi:10.1137/0213049}.

\bibitem{Nak-DAM-03}
M.~Nakamura.
\newblock {Excluded-minor characterizations of antimatroids arisen from posets
  and graph searches}.
\newblock {\em Discrete Applied Mathematics} 129(2-3):487{--}498, 2003,
  \href{http://dx.doi.org/10.1016/S0166-218X(02)00581-4}%
{doi:10.1016/S0166-218X(02)00581-4}.

\bibitem{Pec-Order-06}
M.~Peczarski.
\newblock {The gold partition conjecture}.
\newblock {\em Order} 23(1):89{--}95, 2006,
  \href{http://dx.doi.org/10.1007/s11083-006-9033-1}%
{doi:10.1007/s11083-006-9033-1}.

\bibitem{Ros-DM-74}
D.~J. Rose.
\newblock {On simple characterizations of $k$-trees}.
\newblock {\em Discrete Mathematics} 7(3{--}4):317{--}322, 1974,
  \href{http://dx.doi.org/10.1016/0012-365X(74)90042-9}%
{doi:10.1016/0012-365X(74)90042-9}.

\bibitem{Sak-Order-85}
M.~E. Saks.
\newblock {Balancing linear extensions of ordered sets}.
\newblock {\em Order} 2:327{--}330, 1985,
  \href{http://dx.doi.org/10.1007/BF00333138}%
{doi:10.1007/BF00333138}.

\bibitem{Shi-DAM-84}
D.~R. Shier.
\newblock {Some aspects of perfect elimination orderings in chordal graphs}.
\newblock {\em Discrete Applied Mathematics} 7(3):325{--}331, 1984,
  \href{http://dx.doi.org/10.1016/0166-218X(84)90008-8}%
{doi:10.1016/0166-218X(84)90008-8}.

\bibitem{TroGehFis-Order-92}
W.~T. Trotter, W.~V. Gehrlein, and P.~C. Fishburn.
\newblock {Balance theorems for height-2 posets}.
\newblock {\em Order} 9(1):43{--}53, 1992,
  \href{http://dx.doi.org/10.1007/BF00419038}%
{doi:10.1007/BF00419038}.

\bibitem{TysChe-BSSR-79}
R.~I. Tyshkevich and A.~A. Chernyak.
\newblock {Canonical partition of a graph defined by the degrees of its
  vertices}.
\newblock {\em Isv. Akad. Nauk BSSR, Ser. Fiz.-Mat. Nauk} 5:14{--}26, 1979.

\end{thebibliography}
\end{document}